\newtheorem{theorem}{Theorem}[section]
\newtheorem{Legendre theorem}{Legendre Theorem}[section]
\newtheorem{lemma}[theorem]{Lemma}
\newtheorem{corollary}[theorem]{Corollary}
\newtheorem{proposition}[theorem]{Proposition}
\begin{document}

\begin{center}
{\bf \large   A note on the zero divisor graph of the ring of Lipschitz integers modulo $n$ }

{ Hengbin Zhang\footnote{Corresponding author. E-mail address: zhhb210@163.com.}}\\
{\it  College of Mathematical Science, Yangzhou University, Yangzhou, P.R.China}\\
\end{center}

\noindent{\bf Abstract.} In a recent paper, Grau et al. (2017) studied the zero divisor graphs of the ring of Lipschitz integers modulo $n$, and computed the domination number of the undirected zero divisor graph of the ring of Lipschitz integers modulo $n$. But the case $n$ is a power of prime numbers remained open. In this note, this problem is solved. We also show the automorphism group of the zero divisor graph of the ring of Lipschitz integers modulo $2^{s}$.

\noindent{\bf Mathematics Subject Classification.} 05C69, 05C99, 11R52, 16W99.

{\bf \noindent {\bf Keywords.}} Lipschitz integers; zero divisor graph; automorphism; domination number

\section{Introduction}

The zero divisor graph of the ring is a graph that the vertices are the elements of the ring and two distinct vertices $a$ and $b$ are adjacent if and only if $ab=0$. This concept was first introduced by Beck in \cite{B88}. Anderson and Livingston \cite{A99} reduced the vertex set to the set of non-zero zero divisors of the ring, and they also studied the automorphism group of the zero divisor graph of a  finite commutative ring. Later, Redmond \cite{R02} extended the definition of the zero divisor graph to the directed zero divisor graph and the undirected zero divisor graph. For recent study on the zero divisor graphs, see the papers \cite{A04, A06, A07, B09, M16, W14, W15, J17, Z16, Z17}.

We use $Z^{*}(R)$ and $U(R)$ to denote the set of non-zero zero divisors and the group of units of a ring $R$, respectively. If $a\in R$, then the annihilator of $a$ is $ann(a)=\{b\in R| ab=0 \}$. For a set $S$, $|S|$ denotes the size of $S$, $S\setminus T$ denotes the set of elements that belong to $S$ and not to set $T$.

Let $R$ be a ring with identity. The directed zero divisor graph $\Gamma(R)$ is defined with vertices  $Z^{*}(R)$, where $a\rightarrow b$ is an edge between distinct vertices $a$ and $b$ if and only if $ab=0$. The undirected zero divisor graph $\overline{\Gamma}(R)$ is defined with vertices  $Z^{*}(R)$, where distinct vertices $a$ and $b$ are adjacent if and only if $ab=0$ or $ba=0$.

The ring $\mathbb{Z}[i, j, k]$ of Lipschitz integer quaternions is extended by the Gaussian integers. Let $\mathbb{Z}_{n}[i, j, k] := \{a + bi + cj + dk \mid a, b, c, d \in \mathbb{Z}_{n}\}$ be a factor ring of $\mathbb{Z}[i, j, k]$, which is called the ring of Lipschitz quaternions modulo $n$. This article is motivated by \cite{J17}, where Grau et al. studied the directed $\Gamma(\mathbb{Z}_{n}[i, j, k])$ and the undirected $\overline{\Gamma}(\mathbb{Z}_{n}[i, j, k])$. They showed some results on the number of vertices, the diameter, the girth and the domination number of these graphs. Unfortunately, they left an open problem: For an odd prime number $p$ and a positive integer $s$, what is the domination number of $\overline{\Gamma}(\mathbb{Z}_{p^{s}}[i, j, k])$? In section two, we solve this open problem.

For a graph $\Gamma$, let $V(\Gamma)$ be the vertex set of $\Gamma$. The graph automorphism $f$ of $\Gamma$ is a bijection $f:V(\Gamma)\rightarrow V(\Gamma)$ such that vertices $a$ and $b$ are adjacent if and only if $f(a)$ and $f(b)$ are adjacent. The set of all automorphisms of $\Gamma$ , denoted by $\mathrm{Aut}(\Gamma)$, forms a group under composition of transformations.

Recall that $\mathbb{Z}_{p^{s}}[i,j,k]$ is isomorphic to the full matrix ring $M_{2}(\mathbb{Z}_{p^{s}})$ \cite{V80}. The automorphisms of the zero divisor graph of $M_{2}(\mathbb{Z}_{p^{s}})$ were completely determined in \cite{Z17}. Consequently, the automorphisms of the zero divisor graph of $\mathbb{Z}_{p^{s}}[i,j,k]$ follow from it. Unfortunately,  the automorphisms of the zero divisor graph of $\mathbb{Z}_{2^{s}}[i,j,k]$ are unknown. In section three, we show the automorphism group of $\Gamma(\mathbb{Z}_{2^{s}}[i,j,k])$.

\section{The Domination Number}

A dominating set for a graph $G$ is a subset of vertices $S$ with every vertex not in $S$ is adjacent to at least one vertex of $S$. The domination number is the number of vertices in a minimal dominating set. In \cite{J17}, Grau et al. showed the domination number of $\overline{\Gamma}(\mathbb{Z}_{n}[i, j, k])$ for some cases, but the case $n$ is a power of prime numbers remained open. In this section, this problem is solved in Theorem \ref{t2.4}.

We will denote by $M_{1\times2}(\mathbb{Z}_{p^{s}})$ the set of all $1\times 2$ matrices over $\mathbb{Z}_{p^{s}}$, and by $\alpha^{t}$ the transpose of $\alpha\in M_{1\times2}(\mathbb{Z}_{p^{s}})$. Let us denote by $M^{1}_{1\times2}(\mathbb{Z}_{p^{s}})$ the subset of $M_{1\times2}(\mathbb{Z}_{p^{s}})$ consisting of the vectors whose first unit component is 1, i.e., $M^{1}_{1\times2}(\mathbb{Z}_{p^{s}})=\{(1~a), (b~1)|a\in \mathbb{Z}_{p^{s}},~b\in D(\mathbb{Z}_{p^{s}}) \}$.

\begin{lemma}\label{l2.1}{\rm\cite[Lemma 2.3]{Z17}}
	If $\alpha\in M^{1}_{1\times2}(\mathbb{Z}_{p^{s}})$, then there exists a unique $\beta\in M^{1}_{1\times2}(\mathbb{Z}_{p^{s}})$ such that $\alpha\beta^{t}=0$.
\end{lemma}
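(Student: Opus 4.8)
The plan is to exploit that $\mathbb{Z}_{p^s}$ is a local ring: its non-units are exactly the multiples of $p$, they form the maximal ideal $\mathfrak{m}=p\mathbb{Z}_{p^s}$, and $D(\mathbb{Z}_{p^s})\cup\{0\}=\mathfrak{m}$ (in particular the lemma forces $0\in D(\mathbb{Z}_{p^s})$, since otherwise no $\beta$ could exist for $\alpha=(1~0)$). Two elementary facts will be used throughout: (i) every element of $\mathbb{Z}_{p^s}$ is either a unit or lies in $\mathfrak{m}$, and these two alternatives are mutually exclusive; (ii) a unit plus an element of $\mathfrak{m}$ is again a unit, and any product having a factor in $\mathfrak{m}$ lies in $\mathfrak{m}$, hence is never equal to a unit such as $-1$. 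Writing $\alpha=(\alpha_1~\alpha_2)$ and $\beta=(b_1~b_2)$, the requirement $\alpha\beta^{t}=0$ is the single scalar equation $\alpha_1 b_1+\alpha_2 b_2=0$, and the goal is to show that, under the normalization built into $M^{1}_{1\times 2}(\mathbb{Z}_{p^s})$, this equation has exactly one solution $\beta$.

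First I would split on the two shapes of $\alpha$, and inside each split test the two admissible shapes of $\beta$ in turn. If $\alpha=(1~a)$ the equation is $b_1+ab_2=0$: trying $\beta=(1~c)$ gives $1+ac=0$, which by (ii) is solvable exactly when $a$ is a unit, forcing $c=-a^{-1}$ (itself a unit, so $(1~{-a^{-1}})$ is indeed admissible); trying $\beta=(b~1)$ with $b\in D(\mathbb{Z}_{p^s})$ gives $b+a=0$, forcing $b=-a$, which is admissible exactly when $a\in D(\mathbb{Z}_{p^s})$. By (i) precisely one of these two situations for $a$ holds, so precisely one shape of $\beta$ yields a solution and that solution is unique. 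If instead $\alpha=(b~1)$ with $b\in D(\mathbb{Z}_{p^s})$, the equation is $bb_1+b_2=0$: trying $\beta=(1~c)$ forces $c=-b\in D(\mathbb{Z}_{p^s})$, which is always admissible and unique, whereas $\beta=(b'~1)$ with $b'\in D(\mathbb{Z}_{p^s})$ would require $bb'=-1$, impossible by (ii). Hence again exactly one $\beta$ works.

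To close, I would record the minor compatibility checks that upgrade ``unique solution of the prescribed shape'' to ``unique solution in $M^{1}_{1\times 2}(\mathbb{Z}_{p^s})$'': the two normalized shapes $(1~\ast)$ and $(\ast~1)$ with $\ast$ a non-unit never denote the same row, and each candidate produced above genuinely obeys the normalization (e.g. $-a^{-1}\notin D(\mathbb{Z}_{p^s})$ when $a$ is a unit). I do not anticipate a real obstacle here; the only care needed is bookkeeping — pinning down the status of $0$ in $D(\mathbb{Z}_{p^s})$ (hence the admissibility of the rows $(0~1)$ and $(1~0)$) and checking that the case analyses on $\alpha$ and on $\beta$ are simultaneously exhaustive and mutually exclusive, so that both existence and uniqueness fall out of the single unit/non-unit dichotomy in $\mathbb{Z}_{p^s}$.
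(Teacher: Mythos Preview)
Your argument is correct: the case split on the shape of $\alpha$ and, within each case, on the shape of $\beta$, together with the unit/non-unit dichotomy in the local ring $\mathbb{Z}_{p^s}$, cleanly yields both existence and uniqueness. The bookkeeping you flag (the two normalized shapes are disjoint, and the produced $\beta$ really satisfies the normalization) is routine and goes through as you indicate.

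There is nothing to compare against in the present paper: Lemma~\ref{l2.1} is quoted from \cite[Lemma~2.3]{Z17} and is stated here without proof. Your direct computation is the natural way to establish it and is almost certainly what the cited reference does as well.
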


By \cite[Theorem II.9]{N72}, we know the following lemma.

\begin{lemma}\label{l3.1}
	Every matrix $A$ in $Z^{*}(M_{2}(\mathbb{Z}_{p^{s}}))$ is equivalent to $$
	\begin{pmatrix}
	p^{i} & 0 \\
	0 & p^{j}
	\end{pmatrix},
	$$ where $i,j\in\{0,1,\cdots,s-1\}$, if $p^{j}\neq0$, then $j\neq 0$, $i\leqslant j$. The parameters $(i,j)$ is uniquely determined by $A$.
\end{lemma}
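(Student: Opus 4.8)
The plan is to recognize this as the Smith normal form of $A$ over the finite chain ring $R=\mathbb{Z}_{p^{s}}$ and to obtain uniqueness from ideals of $R$ that are invariant under equivalence (i.e.\ under $A\mapsto PAQ$ with $P,Q\in GL_{2}(\mathbb{Z}_{p^{s}})$). First I would record the structure of $R$: it is local with maximal ideal $(p)$, its ideals form the single chain $0=(p^{s})\subsetneq(p^{s-1})\subsetneq\cdots\subsetneq(p)\subsetneq(1)=R$, every nonzero $x\in R$ is $x=up^{k}$ for a unit $u$ and a well-defined valuation $k=v(x)\in\{0,\dots,s-1\}$, and $x$ is a zero divisor if and only if $v(x)\geq 1$. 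Since $R$ is finite, a matrix in $M_{2}(R)$ is a unit iff it is not a zero divisor iff its determinant is a unit of $R$; hence $A\in Z^{*}(M_{2}(R))$ means exactly that $A\neq 0$ and $p\mid\det A$.

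For existence I would run the usual reduction by elementary operations. Given $0\neq A$, let $i$ be the minimum of $v$ over the four entries of $A$. Permutation matrices lie in $GL_{2}(R)$, so after a row and a column swap I may assume $v(a_{11})=i$, and multiplying the first row by a unit makes $a_{11}=p^{i}$. By minimality of $i$, $p^{i}$ divides the remaining three entries, so I can clear the rest of the first row and first column by adding $R$-multiples of the first row and column, reaching $\mathrm{diag}(p^{i},c)$; as $c$ is an $R$-linear combination of the original entries, $v(c)\geq i$, so $c=0$ or $c=u'p^{j}$ with $i\leq j\leq s-1$, and a final row scaling yields $\mathrm{diag}(p^{i},p^{j})$ (with the convention $p^{j}:=0$ permitted). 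All matrices used are invertible, so $\det A$ is an associate of $p^{i+j}$; combining $p\mid\det A$ with $i\leq j$ gives $i+j\geq 1$ and hence $j\neq 0$ whenever the $(2,2)$-entry is nonzero.

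For uniqueness I would use that the ideal $\mathfrak{d}_{1}(A)$ generated by the entries of $A$ and the ideal $\mathfrak{d}_{2}(A)=(\det A)$ are unchanged when $A$ is replaced by $PAQ$ with $P,Q\in GL_{2}(R)$: the entries of $PAQ$ are $R$-combinations of those of $A$ and conversely, while $\det(PAQ)=(\det P)(\det A)(\det Q)$ with $\det P,\det Q$ units. For the normal form one has $\mathfrak{d}_{1}=(p^{i})$ and $\mathfrak{d}_{2}=(p^{i+j})$, and since the ideals of $R$ are totally ordered and indexed by valuation, $\mathfrak{d}_{1}(A)$ determines $i$ and then $\mathfrak{d}_{2}(A)$ determines $i+j$ (or records $i+j\geq s$, i.e.\ that the $(2,2)$-entry is $0$); thus $(i,j)$ depends only on $A$. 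Equivalently, one may quote the structure theorem for finite $R$-modules applied to $R^{2}/AR^{2}\cong R/p^{i}R\oplus R/p^{j}R$. The reduction step is routine; the only places needing care are that $R$ is not a domain once $s\geq 2$ — so one must exploit the minimality of the pivot's valuation before performing any division — and the rank-one case $\det A=0$, where fixing the convention $p^{j}:=0$ is what makes the pair $(i,j)$ genuinely well defined. I do not expect a real obstacle here; indeed, as stated in the text, the assertion is essentially \cite[Theorem II.9]{N72}.
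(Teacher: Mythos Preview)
Your proposal is correct and matches the paper's approach: the paper does not supply an independent argument but simply invokes \cite[Theorem~II.9]{N72}, exactly the Smith normal form result you identify and sketch. Your write-up merely expands what the citation encodes (existence via pivot reduction using the $p$-adic valuation on the local ring $\mathbb{Z}_{p^{s}}$, uniqueness via the invariance of $\mathfrak{d}_{1}(A)$ and $\mathfrak{d}_{2}(A)=(\det A)$ under equivalence), so there is no substantive divergence.
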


Note that in Lemma \ref{l3.1} for $A,B\in M_{2}(\mathbb{Z}_{p^{s}})$, $A$ is equivalent to $B$ means that there exist $P,Q\in U(M_{2}(\mathbb{Z}_{p^{s}}))$ such that $A=PBQ$. Let $E_{ij}$ denote the matrix in $M_{2}(\mathbb{Z}_{p^{s}})$ having $1$ in its $(i,j)$ entry and zeros elsewhere.

\begin{lemma}\label{l3.2}
	Let $A\in Z^{*}(M_{2}(\mathbb{Z}_{p^{s}}))$. Then $A$ has a unique factorization: $$A=u_{1}p^{i}\alpha^{t}\beta+u_{2}p^{j}E_{mn},$$ where $u_{1},u_{2}\in U(\mathbb{Z}_{p^{s}})$, $\alpha,\beta\in M_{1\times2}^{1}(\mathbb{Z}_{p^{s}})$, $m,n\in \{1,2\}$, if $p^{j}\neq0$, then $j\neq 0$, $i\leqslant j$. $(u_{1},u_{2},\alpha,\beta,i,j,m,n)$ are uniquely determined by $A$.
\end{lemma}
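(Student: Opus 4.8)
The plan is to reduce everything to the Smith-type normal form of Lemma \ref{l3.1}, using two elementary ingredients: first, the fact behind Lemma \ref{l2.1} that over $\mathbb{Z}_{p^{s}}$ every unimodular row (or column) vector is, in exactly one way, a unit times an element of $M_{1\times2}^{1}(\mathbb{Z}_{p^{s}})$, the unit being its first unit component; and second, that a $2\times2$ matrix over $\mathbb{Z}_{p^{s}}$ with zero determinant and a unit entry is automatically of the form $\alpha^{t}\beta$ up to a unit (if its $(k,\ell)$ entry is a unit, the column and row through that entry, suitably scaled, reproduce the matrix).

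\emph{Existence.} By Lemma \ref{l3.1} write $A=PDQ$ with $D=\mathrm{diag}(p^{i},p^{j})$, $i\le j$, and $P,Q\in U(M_{2}(\mathbb{Z}_{p^{s}}))$. Since $D=p^{i}\,\mathrm{diag}(1,p^{j-i})$, every entry of $A$ is divisible by $p^{i}$, so $A=p^{i}B$ with $B:=P\,\mathrm{diag}(1,p^{j-i})\,Q$ equivalent to $\mathrm{diag}(1,p^{j-i})$; in particular $B$ has a unit entry $B_{k\ell}$ and (when $p^{j}\ne0$) $\det B=p^{j-i}\cdot(\text{unit})$. Let $(m,n)$ be the position complementary to $(k,\ell)$. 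The map $u\mapsto\det(B-u\,p^{j-i}E_{mn})$ is affine in $u$ with leading coefficient $\pm B_{k\ell}$, a unit, so there is a unique unit $u_{2}$ with $\det(B-u_{2}p^{j-i}E_{mn})=0$. Put $R:=B-u_{2}p^{j-i}E_{mn}$; then $\det R=0$ and $R$ retains the unit entry $B_{k\ell}$ (as $(m,n)\ne(k,\ell)$), so $R=vw$ with $v$ a unimodular column and $w$ a unimodular row. Normalizing $v=u_{v}\alpha^{t}$, $w=u_{w}\beta$ with $\alpha,\beta\in M_{1\times2}^{1}(\mathbb{Z}_{p^{s}})$ and setting $u_{1}:=u_{v}u_{w}$ yields $B=u_{1}\alpha^{t}\beta+u_{2}p^{j-i}E_{mn}$, hence $A=u_{1}p^{i}\alpha^{t}\beta+u_{2}p^{j}E_{mn}$; when $p^{j}=0$ the last summand vanishes and $A=u_{1}p^{i}\alpha^{t}\beta$.

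\emph{Uniqueness.} First, $i$ and $j$ are intrinsic to $A$: the summand $u_{1}p^{i}\alpha^{t}\beta$ contributes an entry of valuation exactly $i$ while $u_{2}p^{j}E_{mn}$ has valuation at least $j\ge i$, so $p^{i}$ is the first invariant factor of $A$; and a short multilinear expansion shows $\det(u_{1}p^{i}\alpha^{t}\beta+u_{2}p^{j}E_{mn})$ equals $\pm u_{1}u_{2}p^{i+j}$ times one coordinate of $\alpha$ and one of $\beta$, which is forced to be a unit, so $p^{j}$ is the second invariant factor. Given two factorizations with these common exponents, cancel $p^{i}$ and work modulo $p^{s-i}$, comparing $u_{1}\alpha^{t}\beta+u_{2}p^{j-i}E_{mn}$ with $u_{1}'{\alpha'}^{t}\beta'+u_{2}'p^{j-i}E_{m'n'}$. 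Reducing modulo $p$ and using that the residue of an element of $M_{1\times2}^{1}(\mathbb{Z}_{p^{s}})$ is one of the $p+1$ canonical representatives of the lines of $\mathbb{F}_{p}^{2}$, one first matches the rank-one parts to get $\bar\alpha=\bar\alpha'$, $\bar\beta=\bar\beta'$ (the case $j=i$ needing a small extra argument since the two terms no longer separate modulo $p$), then pins down $(m,n)=(m',n')$ by comparing the exact $p$-adic valuations of the individual entries of $A$, and finally lifts these congruences one power of $p$ at a time, by induction on $s$, to $\alpha=\alpha'$, $\beta=\beta'$, $u_{1}=u_{1}'$, $u_{2}=u_{2}'$.

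The normal-form reduction and the affine equation for $u_{2}$ are routine. The main obstacle is the uniqueness step: because an element of $M_{1\times2}^{1}(\mathbb{Z}_{p^{s}})$ is not determined by its reduction modulo $p$, matching residues yields only congruences, so the exact equalities of $\alpha,\beta$ and the precise values of $u_{1},u_{2},m,n$ must be extracted by the inductive lifting, which is where the careful bookkeeping (and the handling of the degenerate case $p^{j}=0$) is concentrated.
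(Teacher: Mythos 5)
Your existence argument is correct and follows essentially the same route as the paper: reduce to the diagonal form of Lemma \ref{l3.1}, factor out $p^{i}$, and remove a suitable unit multiple of $p^{j-i}E_{mn}$ so that the remainder has zero determinant and a unit entry, hence equals a unit times $\alpha^{t}\beta$ with $\alpha,\beta\in M^{1}_{1\times2}(\mathbb{Z}_{p^{s}})$; the paper realizes this by explicit row and column elimination (always producing $E_{22}$), you by an affine determinant equation at the position complementary to a unit entry. One inaccuracy even here: since $\det B=p^{j-i}\cdot(\text{unit})$, the equation $\det(B-up^{j-i}E_{mn})=0$ determines $u$ only modulo $p^{\,s-(j-i)}$, so ``a unique unit $u_{2}$'' is an overclaim (harmless for existence).

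The genuine gap is the uniqueness half, which you only sketch and whose key steps fail. The claim that $(m,n)$ is recovered from the $p$-adic valuations of the entries of $A$, and the concluding ``lift one power of $p$ at a time to exact equalities,'' cannot work: $u_{1}p^{i}\alpha^{t}\beta$ sees $u_{1},\alpha,\beta$ only modulo $p^{s-i}$, $u_{2}p^{j}E_{mn}$ sees $u_{2}$ only modulo $p^{s-j}$, and the position $(m,n)$ is not determined by $A$ at all. Concretely, in $M_{2}(\mathbb{Z}_{p^{2}})$ the matrix $A=\begin{pmatrix}1&1\\1&1+p\end{pmatrix}$ admits $A=\alpha^{t}\alpha+pE_{22}$ with $\alpha=(1\;\,1)$ and also $A=(1-p)\beta^{t}\beta+pE_{11}$ with $\beta=(1\;\,1+p)$: two factorizations meeting all the side conditions but with different $(u_{1},\alpha,\beta,m,n)$. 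Likewise $\det(u_{1}p^{i}\alpha^{t}\beta+u_{2}p^{j}E_{mn})=\pm u_{1}u_{2}p^{i+j}\alpha_{k}\beta_{\ell}$ with $(k,\ell)$ complementary to $(m,n)$, and $\alpha_{k}\beta_{\ell}$ need not be a unit, so your identification of $p^{j}$ as the second invariant factor is also unjustified. For comparison, the paper's own proof stops after the existence construction and offers no uniqueness argument either, and only existence (indeed only the vector $\alpha$) is used in Theorem \ref{t2.4}; so the uniqueness claim should either be dropped or be proved in a normalized form (fixing the position of $E_{mn}$ as in the paper's construction and reading $u_{1},\alpha,\beta$ modulo $p^{s-i}$ and $u_{2}$ modulo $p^{s-j}$), not via the valuation-and-lifting scheme as stated.
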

\begin{proof}
	By Lemma \ref{l3.1}, suppose that $A$ is equivalent to $	\begin{pmatrix} p^{i} & 0 \\ 0 & p^{j} \end{pmatrix}$, where $i,j\in\{0,1, \cdots, s-1\}$, if $p^{j}\neq0$, then $j\neq 0$, $i\leqslant j$.
	
	If $p^{j}=0$, then there exist fixed $P=\begin{pmatrix} a_{1} & * \\ b_{1} & * \end{pmatrix},Q=\begin{pmatrix}	a_{2} & b_{2} \\ * & *	\end{pmatrix}\in U(M_{2}(\mathbb{Z}_{p^{s}}))$ such that $$A=P\begin{pmatrix} p^{i} & 0 \\	0 & 0 \end{pmatrix}Q=p^{i}P\begin{pmatrix} 1 & 0 \\	0 & 0 \end{pmatrix}\begin{pmatrix}	1 & 0 \\	0 & 0	\end{pmatrix}Q=p^{i}\begin{pmatrix}	a_{1} & 0 \\	b_{1} & 0	\end{pmatrix}\begin{pmatrix}	a_{2} & b_{2} \\	0 & 0	\end{pmatrix}.$$ Without loss of generality we can assume $a_{1},a_{2}\in  U(\mathbb{Z}_{p^{s}})$. Let $u_{1}=a_{1}^{-1}a_{2}^{-1}$, $\alpha=\begin{pmatrix}	1 &	a_{1}^{-1}b_{1} 	\end{pmatrix}$ and $\beta=\begin{pmatrix}	1 & a_{2}^{-1}b_{2} 	\end{pmatrix}$. Thus  $A=u_{1}p^{i}\alpha^{t}\beta$.
	
	If $p^{j}\neq0$, then $j\neq 0$, $i\leqslant j$. Without loss of generality we can assume $A=p^{i}\begin{pmatrix}	a & b \\	c & d	\end{pmatrix}$, where $a\in  U(\mathbb{Z}_{p^{s}})$. Let $P=\begin{pmatrix}	1 & 0 \\	-a^{-1}c & 1	\end{pmatrix}$, $Q=\begin{pmatrix}	1 & -a^{-1}b \\	0 & 1	\end{pmatrix}$. Then $PAQ=ap^{i}\begin{pmatrix}	1 & 0 \\0 & a^{-1}d	\end{pmatrix}$. Since $A$ is  is equivalent to $	\begin{pmatrix} p^{i} & 0 \\ 0 & p^{j} \end{pmatrix}$, we can assume $a^{-1}d=up^{j-i}$ for some $u\in U(\mathbb{Z}_{p^{s}})$. Therefore, $$A=ap^{i}P^{-1}\begin{pmatrix}	1 & 0 \\0 & up^{j-i}	\end{pmatrix}Q^{-1}=ap^{i}P^{-1}\begin{pmatrix}	1 & 0 \\0 & 0	\end{pmatrix}Q^{-1}+aup^{j}E_{22}.$$ Let  $u_{1}=a$, $\alpha=\begin{pmatrix}	1 &	a^{-1}c 	\end{pmatrix}$, $\beta=\begin{pmatrix}	1 & a^{-1}b	\end{pmatrix}$ and $u_{2}=au$. Thus $A=u_{1}p^{i}\alpha^{t}\beta+u_{2}p^{j}E_{22}$, and the lemma follows.
\end{proof}

\begin{theorem}\label{t2.4}
	The domination number of $\overline{\Gamma}(\mathbb{Z}_{p^{s}}[i,j,k])$ is $p+1$, where $p$ is an odd prime number and $s\geq 1$.
\end{theorem}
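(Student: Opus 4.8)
The plan is to work throughout in $M_2(\mathbb{Z}_{p^{s}})\cong\mathbb{Z}_{p^{s}}[i,j,k]$, so that the goal becomes: $\overline{\Gamma}(M_2(\mathbb{Z}_{p^{s}}))$ has domination number $p+1$. Note that a matrix $B$ is a nonzero zero divisor exactly when $B\neq0$ and $\det B$ is a non-unit, so its reduction $\bar B$ modulo $p$ always has rank $\leq1$. For the upper bound I would fix a unimodular representative $\tilde v\in\mathbb{Z}_{p^{s}}^{2}$ of each of the $p+1$ lines of $\mathbb{Z}_{p}^{2}$ and set $A_{\tilde v}=p^{s-1}\tilde v^{t}\tilde v$, obtaining $p+1$ distinct elements of $Z^{*}$. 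Given any $B\in Z^{*}$: if $\bar B=0$ then $\tilde vB\equiv0\pmod p$ and hence $A_{\tilde v}B=p^{s-1}\tilde v^{t}(\tilde vB)=0$ for every $\tilde v$; if $\operatorname{rank}\bar B=1$, the left kernel of $\bar B$ is a line represented by some $\tilde v$, and again $A_{\tilde v}B=0$. Either way $B$ is adjacent to (or equal to) some $A_{\tilde v}$, so $\gamma\big(\overline{\Gamma}(M_2(\mathbb{Z}_{p^{s}}))\big)\leq p+1$.

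For the lower bound the right test vertices are $D_{v,w}:=v^{t}w$ with $v,w\in M^{1}_{1\times2}(\mathbb{Z}_{p^{s}})$; these are $\big|M^{1}_{1\times2}(\mathbb{Z}_{p^{s}})\big|^{2}$ distinct elements of $Z^{*}$, and since $v,w$ are unimodular one checks at once that $A$ is adjacent to $D_{v,w}$ iff $Av^{t}=0$ or $wA=0$. Using the Smith form of Lemma \ref{l3.1}, the right (resp.\ left) annihilator of a zero divisor $A$ contains a unimodular vector precisely when $A$ is equivalent to $\left(\begin{smallmatrix}p^{i}&0\\0&0\end{smallmatrix}\right)$ for some $i\in\{0,\dots,s-1\}$, and then it contains exactly $p^{i}\leq p^{s-1}$ lines spanned by unimodular vectors. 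Since $\big|M^{1}_{1\times2}(\mathbb{Z}_{p^{s}})\big|=p^{s}+p^{s-1}=p^{s-1}(p+1)$, if the sets of unimodular lines in the right annihilators of the elements of $S$ cover $M^{1}_{1\times2}(\mathbb{Z}_{p^{s}})$, then, this set having $p^{s-1}(p+1)$ elements and each covering set having at most $p^{s-1}$, we get $|S|\geq p+1$; likewise on the left.

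If neither union is all of $M^{1}_{1\times2}(\mathbb{Z}_{p^{s}})$, pick $v_{0}$, resp.\ $w_{0}$, in $M^{1}_{1\times2}(\mathbb{Z}_{p^{s}})$ not covered on the right, resp.\ on the left; denoting the uncovered sets by $V'$ and $W'$, every $D_{v,w}$ with $v\in V'$, $w\in W'$ is dominated by no element of $S$ and hence lies in $S$. Each such $D_{v,w}$ is equivalent to $\left(\begin{smallmatrix}1&0\\0&0\end{smallmatrix}\right)$ and, by Lemma \ref{l2.1}, contributes only the single unimodular line $w^{\perp}$ to the right union; comparing the size $p^{s-1}(p+1)-|V'|$ of the right union with the total contribution of the vertices of $S$ — at most $|W'|$ from the $|V'||W'|$ forced vertices $D_{v,w}$ together and at most $p^{s-1}$ from each of the remaining $|S|-|V'||W'|$ vertices — yields
$$|S|\ \geq\ |V'||W'|+(p+1)-\frac{|V'|+|W'|}{p^{s-1}},$$
and since $|V'|,|W'|\geq1$ this is $\geq p+1$ whenever $p^{s-1}\geq2$, i.e.\ for every $s\geq2$.

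The case $s=1$, where the displayed bound only gives $|S|\geq p$, I would finish separately: over $\mathbb{Z}_{p}$ every zero divisor has rank $1$, hence is determined up to a unit scalar by its row space and column space, and one verifies that $S$ dominates $\overline{\Gamma}(M_2(\mathbb{Z}_{p}))$ iff the row spaces of its elements exhaust all $p+1$ lines or the column spaces do — whence $|S|\geq p+1$ (concretely, when $|S|=p$ in the remaining case one exhibits an undominated nonzero scalar multiple of some $D_{v_{0},w_{0}}$). The main obstacle is exactly this lower bound: a zero divisor that is $\equiv0\pmod p$ has very large left and right annihilators and so dominates a great many vertices, which is why one must pass to the test family $D_{v,w}$ with $v,w$ genuinely unimodular over $\mathbb{Z}_{p^{s}}$ — vertices such matrices cannot reach — and then defeat the factor $2$ inherent in the two-sided adjacency relation; this is precisely why both the exact size $p^{s-1}(p+1)$ of $M^{1}_{1\times2}(\mathbb{Z}_{p^{s}})$ and the bookkeeping in the last case are essential.
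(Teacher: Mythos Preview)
Your upper bound coincides with the paper's. For the lower bound you take a genuinely different route. The paper first observes (via Lemma~\ref{l3.2}) that replacing each element $A$ of a putative dominating set $E$ by $p^{s-1}\alpha^{t}\beta$ (with $\alpha,\beta$ read off from the factorization of $A$) only enlarges annihilators, so one may assume $E=\{p^{s-1}\alpha_i^{t}\beta_i:1\leq i\leq e\}$ with $e\leq p$; since the $p+1$ matrices $p^{s-1}\alpha^{t}\alpha$ are distinct, some $\alpha_{e+1}$ (resp.\ $\beta_{e+1}$) is missing from $\{\alpha_1,\dots,\alpha_e\}$ (resp.\ $\{\beta_1,\dots,\beta_e\}$) modulo $p$, and then the single test vertex $B=\gamma_2^{t}\gamma_1$ with $\gamma_1\alpha_{e+1}^{t}=\gamma_2\beta_{e+1}^{t}=0$ satisfies $AB\neq0$ and $BA\neq0$ for every $A\in E$. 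This is uniform in $s$ and needs no counting. Your covering argument on unimodular lines is more elementary in that it uses only the Smith form (Lemma~\ref{l3.1}) rather than the refined factorization of Lemma~\ref{l3.2}, at the cost of the case split and a separate $s=1$ treatment.

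That $s=1$ treatment needs care. The equivalence ``$S$ dominates $\overline{\Gamma}(M_2(\mathbb{Z}_p))$ iff its row spaces or its column spaces exhaust all $p+1$ lines'' is false as stated: for $p=3$, with $v_1=(1,0)$, $v_2=(1,1)$, $v_3=(1,2)$, the set $S=\{v_1^{t}v_1,\,2v_1^{t}v_1,\,v_2^{t}v_2,\,v_3^{t}v_3\}$ dominates (the only vertices not adjacent to any member of $S$ are $v_1^{t}v_1$ and $2v_1^{t}v_1$, both already in $S$), yet both its row spaces and its column spaces miss the line through $(0,1)$. Your parenthetical (``exhibit an undominated nonzero scalar multiple of some $D_{v_0,w_0}$'') is the right repair when $|S|\leq p$, but it does not literally close the sub-case $|V'|=|W'|=1$: there all $p-1$ scalar multiples of the unique $D_{v_0,w_0}$ could a priori lie in $S$. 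The clean finish is to note that then $S$ would contain at most one element outside $\{c\,v_0^{t}w_0:c\neq0\}$, forcing at most two distinct row spaces, while $|V'|=1$ forces $p$ distinct row spaces --- a contradiction for $p\geq3$.
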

\begin{proof}
	We use the fact that $\mathbb{Z}_{p^{s}}[i,j,k]\cong M_{2}(\mathbb{Z}_{p^{s}})$. Let $D=\{p^{s-1}\alpha^{t}\alpha\mid\alpha\in M_{1\times2}^{1}(\mathbb{Z}_{p^{s}})\}$. We claim that $D$ is a dominating set for $\overline{\Gamma}(\mathbb{Z}_{p^{s}}[i,j,k])$. Let $A\in V(\overline{\Gamma}(\mathbb{Z}_{p^{s}}[i,j,k]))$. By Lemma \ref{l3.2}, suppose that  $A=u_{1}p^{i}\alpha^{t}\beta+u_{2}p^{j}E_{mn}$. Then there exists a $\gamma\in M_{1\times2}^{1}(\mathbb{Z}_{p^{s}})$ such that $\gamma\alpha^{t}=0$. Set $B=p^{s-1}\gamma^{t}\gamma\in D$. Thus $BA=0$, and D is a dominating set as claimed.
	
    Our next goal is to show that $D$ is a minimal dominating set for $\overline{\Gamma}(\mathbb{Z}_{p^{s}}[i,j,k])$. Suppose that $E$ is a dominating set for $\overline{\Gamma}(\mathbb{Z}_{p^{s}}[i,j,k])$ and $|D|>|E|=e$. It is easy to check that $\mathrm{ann}(p^{s-1}\alpha^{t}\beta)\supseteq \mathrm{ann}(p^{i}\alpha^{t}\beta)\supseteq \mathrm{ann}(\alpha^{t}\beta)\supseteq \mathrm{ann}(\alpha^{t}\beta+up^{j}E_{mn})$, where $i\leqslant s-1$ and $j\neq 0$. Without loss of generality we can assume $E=\{p^{s-1}\alpha^{t}_{1}\beta_{1},p^{s-1}\alpha^{t}_{2}\beta_{2},\cdots,p^{s-1}\alpha^{t}_{e}\beta_{e}\}$. Then there exist $$p^{s-1}\alpha^{t}_{e+1}\alpha_{e+1}\in D\setminus\{p^{s-1}\alpha^{t}_{1}\alpha_{1}, p^{s-1}\alpha^{t}_{2}\alpha_{2}, \cdots,p^{s-1}\alpha^{t}_{e}\alpha_{e}\},$$  $$p^{s-1}\beta^{t}_{e+1}\beta_{e+1}\in D\setminus\{p^{s-1}\beta^{t}_{1}\beta_{1},p^{s-1}\beta^{t}_{2}\beta_{2},\cdots,p^{s-1}\beta^{t}_{e}\beta_{e}\}.$$ Thus, by Lemma \ref{l2.1}, there exist $\gamma_{1},\gamma_{2}\in M_{1\times2}^{1}(\mathbb{Z}_{p^{s}})$ such that for $i\in\{1,2,\cdots,e\}$, $\gamma_{1}\alpha_{i}\neq 0$, $\gamma_{2}\beta_{i}\neq 0$ but $\gamma_{1}\alpha_{e+1}=0$, $\gamma_{2}\beta_{e+1}=0$. Set $B=\gamma_{2}^{t}\gamma_{1}$. Since for any $A\in E$, $AB\neq 0$ and $BA\neq 0$, it follows that $E$ is not a dominating set. An easy computation shows that $\lvert D\rvert=p+1$, which completes the proof.
\end{proof}

Similar to \cite[Theorem 11 and 12]{J17}, we infer the following theorem.

\begin{theorem}
	Let $n=2^{s_{0}}p_{1}^{s_{1}}\cdots p_{m}^{s_{m}}$ with $p_{l}$ is prime and $s_{l}\geq 1$ for every $l$. Then, the domination number of $\overline{\Gamma}(\mathbb{Z}_{n}[i,j,k])$ is $1+m+p_{1}+\cdots +p_{m}$.
\end{theorem}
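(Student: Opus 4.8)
The plan is to reduce the general case to the prime-power case already handled in Theorem~\ref{t2.4}, exactly along the lines of \cite[Theorem~11 and~12]{J17}. First I would invoke the Chinese Remainder Theorem for the underlying ring: writing $n=2^{s_0}p_1^{s_1}\cdots p_m^{s_m}$, we have a ring isomorphism
\[
\mathbb{Z}_n[i,j,k]\;\cong\;\mathbb{Z}_{2^{s_0}}[i,j,k]\times\mathbb{Z}_{p_1^{s_1}}[i,j,k]\times\cdots\times\mathbb{Z}_{p_m^{s_m}}[i,j,k],
\]
so it suffices to understand how the domination number of $\overline{\Gamma}(R_1\times\cdots\times R_k)$ behaves for a product of rings $R_l$.

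The key structural observation is the standard one for zero divisor graphs of product rings: an element $(a_1,\dots,a_k)$ annihilates $(b_1,\dots,b_k)$ on the appropriate side iff $a_lb_l=0$ (or $b_la_l=0$) in each coordinate, and a tuple with at least one unit coordinate and at least one zero coordinate is automatically a zero divisor. Using this, I would show that a minimum dominating set of $\overline{\Gamma}(R_1\times\cdots\times R_k)$ can be taken to consist of (i) for each index $l$, one ``coordinate'' vertex of the form $(0,\dots,0,d_l,0,\dots,0)$ where $d_l$ ranges over a minimum dominating set $D_l$ of $\overline{\Gamma}(R_l)$ placed in slot $l$ (these dominate all vertices that are nonzero zero divisors ``because of'' slot $l$), plus (ii) a single extra vertex such as $(1,1,\dots,1)$ (which, together with appropriate coordinate vertices, dominates the vertices that are zero divisors only because some coordinate is a unit and some coordinate is $0$). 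The careful bookkeeping shows the domination number of the product is $1+\sum_{l}\gamma(\overline{\Gamma}(R_l))$ when each $R_l$ is of the relevant type, where the $1$ accounts for the all-ones vertex; one then checks this quantity cannot be lowered by a matching-type / counting argument analogous to the minimality argument in the proof of Theorem~\ref{t2.4}.

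Finally I would assemble the numbers. For an odd prime power $p_l^{s_l}$, Theorem~\ref{t2.4} gives $\gamma(\overline{\Gamma}(\mathbb{Z}_{p_l^{s_l}}[i,j,k]))=p_l+1$; for the factor $\mathbb{Z}_{2^{s_0}}[i,j,k]$ one uses the known value $\gamma=2+1=3$ (the $p=2$ analogue, available since $\mathbb{Z}_{2^{s}}[i,j,k]\cong M_2(\mathbb{Z}_{2^{s}})$ and the same argument applies with $p=2$, or it is recorded in \cite{J17}). Summing the coordinate contributions $\sum_{l=1}^m(p_l+1)$ together with the contribution $2$ from the $2$-part \emph{beyond} the shared all-ones vertex, and the single $+1$ for that shared vertex, yields $1+m+p_1+\cdots+p_m$, matching the case $s_0=0$ by a direct check. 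The main obstacle is the product lemma in paragraph two: one must verify carefully that no dominating set of the product does better than the ``coordinate vertices plus all-ones vertex'' construction — in particular that the all-ones vertex genuinely cannot be merged into the coordinate contributions and that exactly one such global vertex suffices — since an off-by-one here would change the final formula. Everything else is routine given Theorem~\ref{t2.4} and \cite{J17}.
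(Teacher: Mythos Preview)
Your overall strategy---CRT decomposition followed by a product-of-rings argument, as in \cite[Theorems~11 and~12]{J17}, with Theorem~\ref{t2.4} supplying the missing odd-prime-power values---is exactly what the paper intends (the paper's own proof is just the one-line pointer to \cite{J17}). However, your treatment of the $2$-part contains a genuine error that breaks the final count.

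You assert that $\mathbb{Z}_{2^{s}}[i,j,k]\cong M_{2}(\mathbb{Z}_{2^{s}})$ and hence that its domination number is $3$. This isomorphism is false: it holds only for \emph{odd} primes $p$ (see \cite{V80} and the paper's own remarks). For $p=2$ the ring $\mathbb{Z}_{2^{s}}[i,j,k]$ is a noncommutative \emph{local} ring (as reflected in Lemma~\ref{c2.5} and in \cite[Proposition~6]{J17}, where it is shown to be reversible, something $M_{2}(\mathbb{Z}_{2^{s}})$ certainly is not). Its zero divisor graph has domination number $1$, not $3$: the single vertex $2^{s-1}(1+i+j+k)$ annihilates every zero divisor. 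This is consistent with the stated formula, since setting $m=0$ (so $n=2^{s_{0}}$) yields $1+m+\sum p_{l}=1$.

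Because of this, your final arithmetic does not close. With the correct value $\gamma(\overline{\Gamma}(\mathbb{Z}_{2^{s_{0}}}[i,j,k]))=1$ and $\gamma(\overline{\Gamma}(\mathbb{Z}_{p_{l}^{s_{l}}}[i,j,k]))=p_{l}+1$ from Theorem~\ref{t2.4}, the product argument from \cite{J17} gives $\sum_{l}\gamma_{l}=1+\sum_{l=1}^{m}(p_{l}+1)=1+m+p_{1}+\cdots+p_{m}$ directly, with no extra ``all-ones'' contribution needed. Your proposed formula $1+\sum_{l}\gamma_{l}$ for the product is not the right one here; you should revisit the precise product lemma in \cite{J17} rather than improvise it.
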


\section{Automorphism Group of $\Gamma(\mathbb{Z}_{2^{s}}[i,j,k])$}

We know that $\mathbb{Z}_{2^{s}}[i,j,k]$ is reversible, by \cite[Proposition 6]{J17}, this means that for every $\alpha,\beta\in \mathbb{Z}_{2^{s}}[i,j,k]$, $\alpha\beta=0$ implies that $\beta\alpha=0$. Hence, $\Gamma(\mathbb{Z}_{2^{s}}[i,j,k])=\overline{\Gamma}(\mathbb{Z}_{2^{s}}[i,j,k])$. In order to obtain the automorphisms of the zero divisor graph of $\mathbb{Z}_{2^{s}}[i,j,k]$, we give some notation. For a direct zero divisor graph $\Gamma$ and $a,b\in V(\Gamma)$, we denote by $N_{r}(a)$ (resp. $N_{l}(a)$) the set of vertices $b\in V(\Gamma)$ for which $ab=0$ (resp. $ba=0$). We write $N(a)=N(b)$ to denote that $N_{r}(a)=N_{r}(b)$ and $N_{l}(a)=N_{l}(b)$ for $a,b\in V(\Gamma)$. If $N(a)=N(b)$, then we will say that $a,b$ are twin points. If a bijection $\varphi$ on $\Gamma$ acts in such a way that $\varphi(a)=b$ implies that $a,b$ are twin points, then it is an automorphism of $\Gamma$, which is called a regular automorphism of $\Gamma$. The set of all regular automorphisms of $\Gamma$ is denoted by $\mathrm{Reg}(\Gamma)$. for a ring $R$ and $\alpha,\beta\in R$, if there exists a $\gamma\in U(R)$ such that $\alpha\gamma=\beta$, then we write $\alpha\sim\beta$. Obviously, relation $\sim$ is an equivalence relation on $R$. Let $[\alpha]$ be the equivalence class of $\alpha$, that is, $[\alpha]=\{\beta\in R\mid\beta\sim\alpha\}$. Let $D=D_{1}\cup D_{2}$, where $D_{1},D_{2} \subset \mathbb{Z}_{2^{s}}[i,j,k]$, $D_{1}=\{1+i,1+j,1+k\}$ and $D_{2}=\{1+i+j+k,1+i+j-k\}$. If $S$ is a subset of $V(\Gamma(\mathbb{Z}_{2^{s}}[i,j,k]))$, then we denote $N(S)$ by the set of all neighbors of the vertices of $S$ in graph $\Gamma(\mathbb{Z}_{2^{s}}[i,j,k])$.

From \cite[Lemma 2.6.5]{G03} and \cite[Lemma 2(a)]{A93}, we can conclude the following lemma.

\begin{lemma}\label{c2.5}
	Let $0\neq\alpha \in \mathbb{Z}_{2^{s}}[i,j,k]$. Then $\alpha$ has a unique factorization:
	$$\alpha = 2^{l}\pi\alpha_{0},$$
	where $0\leqslant l< s$, $\pi\in \{1,1 + i,1 + j,1 + k,(1 + i)(1 + j),(1 + i)(1 - k)\}$ and
	$\alpha_{0}\in U(\mathbb{Z}_{2^{s}}[i,j,k])$.
\end{lemma}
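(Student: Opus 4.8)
The statement is meant to follow by assembling the two cited results, so my plan is to make that assembly explicit and to isolate the single step that is not bookkeeping. Write $R=\mathbb{Z}_{2^{s}}[i,j,k]$, and recall that $\alpha\in R$ is a unit exactly when its quaternion norm $N(\alpha)$ is odd; in particular $1+i,1+j,1+k$ are nonunits of $R$, whereas several elements that are nonunits of the integral order $\mathbb{Z}[i,j,k]$ (such as $1+j-k$, of norm $3$) do become units in $R$.

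First I would pull out the power of $2$: let $2^{l}$ be the largest power of $2$ with $\alpha\in 2^{l}R$. Since $\alpha\neq 0$ in $R$ we get $0\leqslant l<s$, and $l$ is plainly determined by $\alpha$. Writing $\alpha=2^{l}\beta$ with $\beta$ primitive (i.e. $\beta\not\equiv 0\bmod 2$), the problem reduces to factoring a primitive $\beta$ as $\beta=\pi\alpha_{0}$ with $\pi$ one of the six listed elements and $\alpha_{0}\in U(R)$.

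The classification of $\beta$ is exactly the content of \cite[Lemma 2.6.5]{G03} and \cite[Lemma 2(a)]{A93}. Concretely one reduces $\beta$ modulo $2$ first: $\mathbb{Z}[i,j,k]/2\mathbb{Z}[i,j,k]$ is a commutative local ring of order $16$ with residue field $\mathbb{F}_{2}$ and nonzero radical square, and a short computation shows its $15$ nonzero elements fall into exactly five orbits under its $8$-element unit group, with representatives the images of $1,\ 1+i,\ 1+j,\ 1+k$ and $(1+i)(1+j)\equiv(1+i)(1-k)$. The cited lemmas refine this orbit picture to $R$: every primitive $\beta$ is a right unit multiple of exactly one of the six elements $1,\ 1+i,\ 1+j,\ 1+k,\ (1+i)(1+j),\ (1+i)(1-k)$, and these six generate six distinct principal right ideals of $R$ as soon as $s\geqslant 2$. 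Together with the previous step this gives $\alpha=2^{l}\pi\alpha_{0}$ with $l$ and $\pi$ uniquely determined by $\alpha$ and $\alpha_{0}\in U(R)$ the leftover unit factor.

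The one point carrying genuine content, and the one I expect to be the main obstacle, is the non-maximality of $\mathbb{Z}[i,j,k]$: it has index $2$ in the Hurwitz order, and although $(1+i)^{2}R=2R$, the unique right ideal of $\mathbb{Z}[i,j,k]$ of index $2$ above $2$ is non-principal (a principal one would need a reduced norm equal to $\sqrt{2}$). Hence the primitive elements do not line up as unit multiples of powers of a single uniformizer $1+i$, and all six representatives, the three norm-$2$ primes together with the two norm-$4$ elements, are genuinely needed. The delicate assertion is then that $(1+i)(1+j)$ and $(1+i)(1-k)$, which agree modulo $2$, are inequivalent modulo $2^{s}$ for $s\geqslant 2$: the only quaternion $\gamma$ with $(1+i)(1-k)\gamma=(1+i)(1+j)$ is $\gamma=(1-i+j+k)/2$, a Hurwitz unit but not a Lipschitz integer, so no integral $\gamma$ works already modulo $4$. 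Everything else is the orbit bookkeeping that \cite[Lemma 2.6.5]{G03} and \cite[Lemma 2(a)]{A93} package.
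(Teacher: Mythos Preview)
Your proposal is correct and follows exactly the route the paper indicates: the paper gives no argument beyond the sentence ``From \cite[Lemma 2.6.5]{G03} and \cite[Lemma 2(a)]{A93}, we can conclude the following lemma,'' and you have simply unpacked that citation---extracting the $2$-power, analysing the five associate classes in $\mathbb{Z}[i,j,k]/2\mathbb{Z}[i,j,k]\cong\mathbb{F}_{2}[u,v]/(u^{2},v^{2})$, and isolating the one nontrivial point (the inequivalence of $(1+i)(1+j)$ and $(1+i)(1-k)$ modulo $4$, forced by the non-maximality of the Lipschitz order). There is nothing further to compare; your write-up is a faithful and more detailed version of what the paper leaves implicit.
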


By Lemma \ref{c2.5}, it is easy to obtain the following lemma.

\begin{lemma}\label{l4.1}
    Let $D=\{1+i,1+j,1+k,1+i+j+k,1+i+j-k\}$. Then the set of the equivalence class of  $\mathbb{Z}_{2^{s}}[i,j,k]$ is $\{[2^{l}\alpha]\mid\alpha\in D, 0\leqslant l<s\}$.
\end{lemma}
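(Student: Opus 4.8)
The plan is to use Lemma~\ref{c2.5} to reduce the computation of equivalence classes under $\sim$ to the finite list of ``prime parts'' $\pi \in \{1, 1+i, 1+j, 1+k, (1+i)(1+j), (1+i)(1-k)\}$ together with the power $2^{l}$, and then to match each of the six prime parts (and its $2^l$ multiple) against an element of $D = \{1+i, 1+j, 1+k, 1+i+j+k, 1+i+j-k\}$. First I would observe that if $\alpha = 2^{l}\pi\alpha_0$ with $\alpha_0$ a unit, then $\alpha \sim 2^{l}\pi$, since $\alpha = (2^l\pi)\alpha_0$ and $\alpha_0 \in U(\mathbb{Z}_{2^s}[i,j,k])$; conversely multiplying $2^l\pi$ on the right by any unit keeps it in the same class. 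So every nonzero element is equivalent to exactly one $2^l\pi$ with $0 \le l < s$ and $\pi$ in the six-element list, and by uniqueness in Lemma~\ref{c2.5} two such representatives $2^{l}\pi$ and $2^{l'}\pi'$ are equivalent only if $l = l'$ and $\pi = \pi'$. (Here one should note that for $\alpha=0$ the statement is vacuous, or $0$ can be absorbed by taking the class $[0]$ separately; the lemma as stated concerns nonzero $\alpha$, matching Lemma~\ref{c2.5}.)

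Next I would identify the six prime parts with the five elements of $D$ up to right multiplication by a unit. Four of them are immediate: $1 \sim 1+i$ because $1+i$ is \emph{not} a unit while $1$ is — so actually $1$ and $1+i$ are \emph{not} equivalent, and I must be more careful. The correct bookkeeping: $\pi = 1$ contributes the class of units (when $l=0$) and classes $[2^l]$ for $l \ge 1$; but $D$ contains no element equivalent to $1$. This means Lemma~\ref{c2.5}'s list of prime parts and $D$ are genuinely different sets, and the claim $\{[2^l\alpha] : \alpha \in D\}$ must be re-examined — so the real content is that $D$, not the $\pi$-list, indexes the classes of \emph{zero divisors and $2$-power multiples}. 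I would therefore split: $1+i, 1+j, 1+k$ appear verbatim in both $D$ and the $\pi$-list, so $[2^l(1+i)], [2^l(1+j)], [2^l(1+k)]$ are three of the class families. For the remaining two prime parts $(1+i)(1+j) = 1 + i + j + k$ (using $ij = k$) and $(1+i)(1-k) = 1 - k + i - ik = 1 + i + j - k$ (using $-ik = j$), a direct quaternion multiplication shows these equal $1+i+j+k$ and $1+i+j-k$ respectively, which are exactly $D_2$. Thus the six $\pi$'s map onto $\{1\} \cup D$, and removing the unit class $[1] = U(\mathbb{Z}_{2^s}[i,j,k])$ (together with $[2^l]$ for $l\ge1$, which is $[2^l \cdot 1]$) leaves precisely $\{[2^l\alpha] : \alpha \in D, 0 \le l < s\}$ as the classes supported on zero divisors.

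The main obstacle — and the step needing genuine care — is the clash I just flagged: deciding exactly what ``the set of equivalence classes'' in the statement refers to, since a literal reading would have to include $[1]$ and the $[2^l]$ and would contradict Lemma~\ref{c2.5}'s own six-element list. I expect the intended reading is ``the set of equivalence classes of nonzero zero divisors'' (the relevant vertices for $\Gamma(\mathbb{Z}_{2^s}[i,j,k])$), and the proof is the dictionary above: $2^l \cdot 1$ with $l \ge 1$ is not a zero divisor of the form that matters here, or is otherwise excluded, while the other five prime parts are, up to $\sim$, the five elements of $D$ — with the two nontrivial identifications $(1+i)(1+j) \sim 1+i+j+k$ and $(1+i)(1-k) \sim 1+i+j-k$ verified by multiplying out in $\mathbb{Z}_{2^s}[i,j,k]$ and checking the cofactor is a unit (its norm is odd). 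Once the indexing convention is pinned down, uniqueness of the family $\{[2^l\alpha]\}$ follows directly from the uniqueness clause of Lemma~\ref{c2.5}.
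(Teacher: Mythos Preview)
Your core approach is exactly what the paper intends: it simply says ``By Lemma~\ref{c2.5}, it is easy to obtain the following lemma,'' and the content of that sentence is precisely your two computations
\[
(1+i)(1+j)=1+i+j+k,\qquad (1+i)(1-k)=1+i+j-k,
\]
which identify the six ``prime parts'' of Lemma~\ref{c2.5} with $\{1\}\cup D$. Together with the observation that $\alpha=2^{l}\pi\alpha_0$ implies $\alpha\sim 2^{l}\pi$, this gives the list of right-$\sim$ classes. So far you and the paper agree.

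Where you go wrong is in your attempt to make the class $[2^{l}]$ disappear. You write that ``$2^{l}\cdot 1$ with $l\ge 1$ is not a zero divisor of the form that matters here, or is otherwise excluded.'' That is false: for $1\le l\le s-1$ the element $2^{l}$ is a nonzero zero divisor (it annihilates $2^{s-l}$), it is \emph{not} right-equivalent to any $2^{l}\alpha$ with $\alpha\in D$ (since every $\alpha\in D$ has even norm, hence is a nonunit), and the paper itself treats $[2^{m}]$ as a separate vertex class in Lemma~\ref{l4.2}(i), (iii), (v) via the index set $D\cup\{1\}$. So the discrepancy you detected is a genuine imprecision in the \emph{statement} of Lemma~\ref{l4.1}, not something your proof should explain away. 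The honest conclusion from Lemma~\ref{c2.5} is that the $\sim$-classes of nonzero elements are $\{[2^{l}\pi]:0\le l<s,\ \pi\in\{1\}\cup D\}$, with $[1]=U(\mathbb{Z}_{2^{s}}[i,j,k])$ the single class of units; this is exactly how the paper proceeds in Lemma~\ref{l4.2}. Once you stop trying to force the list to match $D$ alone, your argument is complete and coincides with the paper's.
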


Note that $2^{s-1}(1+i+j+k)=2^{s-1}(1+i+j-k)$. Then a trivial verification shows the following lemma.

\begin{lemma}\label{l4.2}
	Let $D=D_{1}\cup D_{2}$, where $D_{1}=\{1+i,1+j,1+k\}$ and $D_{2}=\{1+i+j+k,1+i+j-k\}$.
	
	(i) Let $1\leqslant m\leqslant s-1$. Then $$N([2^{m}])=\bigcup_{\substack{s-m\leqslant l\leqslant s-1\\   \alpha\in D\cup\{1\}}}[2^{l}\alpha]\setminus [2^{m}].$$
	
	(ii) Let $\alpha\in D_{1}$. Then $N(\alpha)=[2^{s-1}\alpha]\cup[2^{s-1}(1+i+j+k)]$.
	
	(iii) Let $1\leqslant m\leqslant s-1$ and $\alpha\in D_{1}$. Then
	$$N([2^{m}\alpha])=\bigcup_{\substack{s-m\leqslant l\leqslant s-1\\   \beta\in D\cup\{1\}}}[2^{l}\beta]\bigcup_{\beta\in D_{2}\cup\{\alpha\}}[2^{s-1-m}\beta]\setminus [2^{m}\alpha]. $$
	
	(iv) Let $\alpha\in D_{2}$. Then $$N(\alpha)=\bigcup_{\beta\in D}[2^{s-1}\beta]\bigcup_{\beta\in D_{2}\setminus\{\alpha\}}[2^{s-2}\beta].$$
	
	(v) Let $1\leqslant m< s-1$ and $\alpha\in D_{2}$. Then 	$$N([2^{m}\alpha])=\bigcup_{s-m\leqslant l\leqslant s-1}[2^{l}]\bigcup_{\substack{s-1-m\leqslant l\leqslant s-1\\   \beta\in D}}[2^{l}\beta]\bigcup_{\beta\in D_{2}\setminus\{\alpha\}}[2^{s-2-m}\beta]\setminus [2^{m}\alpha]. $$
	
	(vi) Let $\alpha\in D_{2}$. Then $$N([2^{s-1}\alpha])=\bigcup_{\substack{0\leqslant l\leqslant s-1 \\\beta\in D}}[2^{l}\beta]\setminus [2^{s-1}\alpha].$$
\end{lemma}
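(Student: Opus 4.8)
The plan is to read all six neighbourhoods off the factorisation of Lemma \ref{c2.5}. Put $\Pi=\{1,1+i,1+j,1+k,(1+i)(1+j),(1+i)(1-k)\}$; since $(1+i)(1+j)=1+i+j+k$ and $(1+i)(1-k)=1+i+j-k$ we have $\Pi=D\cup\{1\}$. By Lemma \ref{c2.5} the $\sim$-classes of $\mathbb{Z}_{2^{s}}[i,j,k]$ are precisely the $[2^{l}\pi]$ with $0\leqslant l<s$ and $\pi\in\Pi$; these are pairwise distinct except that $2^{s-1}(1+i+j+k)=2^{s-1}(1+i+j-k)$, as noted just before the statement, and the vertices of $\Gamma(\mathbb{Z}_{2^{s}}[i,j,k])$ are the $[2^{l}\pi]$ with $(l,\pi)\neq(0,1)$. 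Since $\mathbb{Z}_{2^{s}}[i,j,k]$ is reversible, $\mathrm{ann}(\alpha)$ is a two-sided ideal that depends only on $[\alpha]$, so $N([2^{m}\pi])=\mathrm{ann}(2^{m}\pi)\setminus\{0\}$ is indeed a union of classes.

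The first step is the criterion $[2^{l}\pi']\subseteq N([2^{m}\pi])\iff 2^{m}\pi\cdot 2^{l}\pi'=0\iff 2^{m+l}\pi\pi'=0$. Computing $\pi\pi'$ in $\mathbb{Z}[i,j,k]$ one has $\pi\pi'=2^{v(\pi,\pi')}\rho$ with $\rho$ primitive and $v(\pi,\pi')\in\{0,1,2\}$; since $2^{t}\rho=0$ in $\mathbb{Z}_{2^{s}}[i,j,k]$ exactly when $t\geqslant s$, the criterion reads $m+l+v(\pi,\pi')\geqslant s$. Therefore
\[
N([2^{m}\pi])=\Big(\bigcup_{\pi'\in\Pi}\bigcup_{\max\{0,\,s-m-v(\pi,\pi')\}\leqslant l\leqslant s-1}[2^{l}\pi']\Big)\setminus\{2^{m}\pi\},
\]
and everything reduces to the finite table of the integers $v(\pi,\pi')$; each of (i)--(vi) will then be this formula rewritten, the deleted vertex accounting for the ``$\setminus[2^{m}\pi]$'' (a vertex is not its own neighbour).

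To build the table, note first that $v(1,\pi')=0$ for every $\pi'$. For $\pi\in D_{1}$ the relevant products are $(1+i)^{2}=2i$, $(1+i)(1+j)=1+i+j+k$, $(1+i)(1+k)=1+i-j+k$, $(1+i)(1+i+j+k)=2(i+k)$, $(1+i)(1+i+j-k)=2(i+j)$, giving $v(\pi,\pi')=1$ exactly for $\pi'\in\{\pi\}\cup D_{2}$ and $v(\pi,\pi')=0$ otherwise. For $\pi\in D_{2}$ the products $(1+i+j+k)(1+i)=2(i+j)$, $(1+i+j+k)^{2}=2(-1+i+j+k)$, $(1+i+j+k)(1+i+j-k)=4j$ give $v(\pi,1)=0$, $v(\pi,\pi')=1$ for $\pi'\in D_{1}\cup\{\pi\}$, and $v(\pi,\pi'')=2$ for the remaining element $\pi''$ of $D_{2}$. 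The entries for $1+j,1+k\in D_{1}$ and for $1+i+j-k\in D_{2}$ follow from the symmetries of $\mathbb{Z}_{2^{s}}[i,j,k]$ that permute $i,j,k$ and that send $k\mapsto-k$.

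Substituting these valuations into the displayed formula and collecting the index ranges gives (i)--(vi). I expect the only delicate point to be keeping the coincidence $2^{s-1}(1+i+j+k)=2^{s-1}(1+i+j-k)$ straight, since it is precisely this identification that makes the level-$(s-1)$ terms combine into the forms stated in (ii), (iv) and (vi); apart from that the proof is routine bookkeeping, with no conceptual obstacle once the valuation table is fixed.
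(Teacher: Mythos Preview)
Your approach is correct and is exactly the ``trivial verification'' the paper leaves to the reader: the paper states Lemma~\ref{c2.5} and the identity $2^{s-1}(1+i+j+k)=2^{s-1}(1+i+j-k)$ immediately before the lemma and then asserts the result without further argument, so your reduction to the valuation table $v(\pi,\pi')$ via that factorisation is precisely the intended computation carried out in full. One small caution: when you substitute your table into the displayed formula for case~(vi) you will find the classes $[2^{l}]$ with $1\leqslant l\leqslant s-1$ appear as well (since $v(\alpha,1)=0$ forces $l\geqslant 1$ there), so check that your final bookkeeping for that case matches what you actually obtain rather than the printed statement.
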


By Lemma \ref{l4.2}, the proof of the following corollary is straightforward.

\begin{corollary}\label{c4.3}
	Let $\alpha,\beta\in \mathbb{Z}_{2^{s}}[i,j,k]$.
	
	(i) If $s$ is even, then $N(\alpha)=N(\beta)$ if and only if $\alpha\sim\beta$.
	
	(ii) If $s$ is odd, then $N(\alpha)=N(\beta)$ if and only if $\alpha\sim\beta$ or $\alpha,\beta\in\{2^{\frac{s-1}{2}}(1+i),2^{\frac{s-1}{2}}(1+j),2^{\frac{s-1}{2}}(1+k)\}$.
\end{corollary}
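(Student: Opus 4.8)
The plan is to read off the answer directly from the neighbourhood descriptions in Lemma \ref{l4.2}, organized by the two possible parities of $s$. First I would record the "easy'' direction: if $\alpha\sim\beta$ then $\alpha=\beta\gamma$ for a unit $\gamma$, and since the zero divisor relation is two-sided (reversibility of $\mathbb{Z}_{2^{s}}[i,j,k]$) and multiplication by a unit is a bijection of annihilators, one gets $N_r(\alpha)=N_r(\beta)$ and $N_l(\alpha)=N_l(\beta)$, i.e. $N(\alpha)=N(\beta)$; this handles one implication in both (i) and (ii) at once. By Lemma \ref{l4.1} every vertex is $\sim$-equivalent to some $2^{l}\delta$ with $\delta\in D$ and $0\le l<s$, so it suffices to compare the neighbourhoods of these canonical representatives, which is exactly what Lemma \ref{l4.2}(i)--(vi) computes.

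Next I would carry out the comparison. The six cases of Lemma \ref{l4.2} fall into types indexed by the pair $(\delta\text{-type},\,l)$, where $\delta$-type is one of $\{1,\ D_1,\ D_2\}$ and $l$ ranges over the allowed exponents; I claim that, with one exception, distinct types yield distinct neighbourhood sets. To see this I would pick, for each type, a distinguishing feature of the right-hand side — e.g. the smallest exponent $l_0$ such that some $[2^{l_0}\beta]$ appears (this is $s-m$ for $2^{m}$, is $s-1-m$ for $2^{m}\alpha$ with $\alpha\in D_1$ when $m<s-1$ and jumps for $m=s-1$, is $s-2-m$ for $2^{m}\alpha$ with $\alpha\in D_2$, etc.), together with which classes $[2^{l_0}\beta]$ occur at that bottom level and whether the ambient "base'' part $\bigcup[2^{l}]$ is present. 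Running through the list, the only collision is between $N(1+i),N(1+j),N(1+k)$ from part (ii), each equal to $[2^{s-1}\alpha]\cup[2^{s-1}(1+i+j+k)]$: here the self-term $[2^{s-1}\alpha]$ collapses because $2^{s-1}(1+i)=2^{s-1}(1+j)=2^{s-1}(1+k)$ is $2^{s-1}$ times a unit—wait, more precisely $2^{s-1}\cdot 2 = 0$ forces $2^{s-1}(1+i)\sim 2^{s-1}$, so all three classes $[2^{s-1}(1+i)],[2^{s-1}(1+j)],[2^{s-1}(1+k)]$ coincide with $[2^{s-1}]$—hence $N(1+i)=N(1+j)=N(1+k)$ regardless of parity. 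So I would isolate these three elements as the one genuine non-trivial twin class.

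Finally I would pin down the parity dependence. The elements $1+i,1+j,1+k$ are each of the form $2^{m}\alpha$ with $m=0$ and $\alpha\in D_1$; they are $\sim$-equivalent to each other precisely when $1+i$ and $1+j$ differ by a unit, which by the canonical form of Lemma \ref{l4.1}/Lemma \ref{c2.5} they never do (they lie in distinct equivalence classes). Thus $\{1+i,1+j,1+k\}$ is a twin class not contained in a single $\sim$-class. Tracing where such a class can occur: the coincidence $2^{s-1}(1+i)=2^{s-1}(1+j)=2^{s-1}(1+k)$ needs exponent $s-1$ on elements of $D_1$, i.e. it is the $m=s-1$ instance of part (iii) degenerating into part (ii). When $s$ is odd, $s-1=2\cdot\frac{s-1}{2}$, and one checks (again from Lemma \ref{l4.2}(iii) with $m=\frac{s-1}{2}$, using $s-1-m=\frac{s-1}{2}$ as well) that $2^{\frac{s-1}{2}}(1+i),2^{\frac{s-1}{2}}(1+j),2^{\frac{s-1}{2}}(1+k)$ have a symmetric neighbourhood in which the "self'' term $[2^{s-1-m}\alpha]=[2^{\frac{s-1}{2}}\alpha]$ merges into the common $D_1$-block, so these three are mutual twins without being $\sim$-related; when $s$ is even no such middle exponent exists and the self-term never merges, so no extra twins appear. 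Comparing with the list of all types, these are the only exceptional twins, giving exactly the statements (i) and (ii).

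The main obstacle I anticipate is purely bookkeeping: verifying that the six neighbourhood formulas in Lemma \ref{l4.2}, once distinct types are compared pairwise, really do differ except in the one (even $s$) or two (odd $s$) noted situations — in particular being careful about the boundary exponents ($m=1$, $m=s-1$, $m=s-2$) where the index ranges in Lemma \ref{l4.2} degenerate and one formula can collapse onto another. No single step is deep; the care is entirely in the case analysis.
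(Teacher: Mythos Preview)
Your overall approach --- reduce to canonical representatives via Lemma~\ref{l4.1} and then compare the explicit neighbourhoods listed in Lemma~\ref{l4.2} --- is exactly what the paper has in mind (the paper simply says the corollary is ``straightforward'' from Lemma~\ref{l4.2}). The easy direction is fine.

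However, your execution contains a concrete error. You claim that $2^{s-1}(1+i)=2^{s-1}(1+j)=2^{s-1}(1+k)$ and that these are all $\sim 2^{s-1}$, and hence that $N(1+i)=N(1+j)=N(1+k)$. None of this is true: in $\mathbb{Z}_{2^{s}}[i,j,k]$ the element $2^{s-1}(1+i)=2^{s-1}+2^{s-1}i$ is visibly different from $2^{s-1}+2^{s-1}j$, and a short norm or congruence computation shows $(1+i)\gamma\not\equiv 1\pmod 2$ for any unit $\gamma$, so $[2^{s-1}(1+i)]\ne[2^{s-1}]$. By Lemma~\ref{l4.1} the three classes $[2^{s-1}(1+i)],[2^{s-1}(1+j)],[2^{s-1}(1+k)]$ are pairwise distinct, and therefore Lemma~\ref{l4.2}(ii) gives three \emph{different} neighbourhoods $N(1+i),N(1+j),N(1+k)$. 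So the ``one genuine non-trivial twin class'' you isolate at level $m=0$ does not exist, and the paragraph built on it (the purported coincidence at exponent $s-1$) should be discarded.

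The correct mechanism is the one you mention only in passing near the end. In Lemma~\ref{l4.2}(iii) the $\alpha$-dependence of $N([2^{m}\alpha])$ for $\alpha\in D_{1}$ sits entirely in the single term $[2^{s-1-m}\alpha]$. When $s-1-m\neq m$ this term survives the deletion $\setminus[2^{m}\alpha]$ and distinguishes the three neighbourhoods; when $s-1-m=m$, i.e.\ $s$ odd and $m=\tfrac{s-1}{2}$, that term is exactly the one removed, so the neighbourhood becomes independent of $\alpha$ and the three elements $2^{(s-1)/2}(1+i),\,2^{(s-1)/2}(1+j),\,2^{(s-1)/2}(1+k)$ are twins. (It does not ``merge into a $D_{1}$-block''; it is simply cancelled by the set subtraction.) Once you replace the faulty $m=0$ discussion with this observation, the remaining pairwise comparison of the formulas in Lemma~\ref{l4.2}(i)--(vi) goes through as you outline, giving (i) and (ii).
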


For a graph $\Gamma$ and $a\in V(\Gamma)$, let $\bar{a}=\{b\in V(\Gamma)\mid a,b\text{ are twin points}\}$. Let $\Gamma_{E}$ be the compressed graph of $\Gamma$, whose vertices are $\{\bar{a}\mid a\in V(\Gamma)\}$ such that distinct vertices $\bar{a}$ and $\bar{b}$ are adjacent if and only if $a$ and $b$ are adjacent in $\Gamma$. For a vertex $\bar{\alpha}$ in the compressed graph of the zero divisor graph $ \Gamma_{E}(\mathbb{Z}_{2^{s}}[i,j,k])$, we denote $N_{E}(\bar{\alpha})$ be the set of all neighbors of $\bar{\alpha}$ in $ \Gamma_{E}(\mathbb{Z}_{2^{s}}[i,j,k])$. By Lemma \ref{l4.2} and Corollary \ref{c4.3}, it is easy to obtain the next proposition.

\begin{proposition}\label{p4.4}
	Let $D=D_{1}\cup D_{2}$, where $D_{1}=\{1+i,1+j,1+k\}$ and $D_{2}=\{1+i+j+k,1+i+j-k\}$.
	
	(i) If $s$ is even, $\alpha\in D_{1}$, $\beta\in D_{2}$, then $$|N_{E}(\overline{2^{m}})|=\begin{cases}
	6m-1,\quad &1\leqslant m<\frac{s}{2}\\
	6m-2,\quad &\frac{s}{2}\leqslant m\leqslant s-1,
	\end{cases}$$
	$$|N_{E}(\overline{2^{m}\alpha})|=\begin{cases}
	6m+2,\quad &0\leqslant m<\frac{s-1}{2}\\
	6m+1,\quad &\frac{s-1}{2}\leqslant m\leqslant s-1,
	\end{cases}$$
	$$|N_{E}(\overline{2^{m}\beta})|=\begin{cases}
	6m+5,\quad &0\leqslant m<\frac{s-1}{2}\\
	6m+4,\quad &\frac{s-1}{2}\leqslant m< s-1.
	\end{cases}$$
	
	(ii) If $s$ is odd, $\alpha\in D_{1}$, $\beta\in D_{2}$, then $$|N_{E}(\overline{2^{m}})|=\begin{cases}
	6m-1,\quad &1\leqslant m<\frac{s}{2}\\
	6m-4,\quad &\frac{s}{2}\leqslant m\leqslant s-1,
	\end{cases}$$
	$$|N_{E}(\overline{2^{m}\alpha})|=\begin{cases}
    6m+2,\quad &0\leqslant m<\frac{s-1}{2}\\
    6m+1,\quad &m=\frac{s-1}{2}\\
    6m-1,\quad &\frac{s-1}{2}< m\leqslant s-1,
    \end{cases}$$
    $$|N_{E}(\overline{2^{m}\beta})|=\begin{cases}
    6m+5,\quad &0\leqslant m<\frac{s-1}{2}\\
    6m+2,\quad &\frac{s-1}{2}\leqslant m< s-1.
    \end{cases}$$
\end{proposition}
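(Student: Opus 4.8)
\noindent\emph{Plan of proof.} Since $\Gamma(\mathbb{Z}_{2^{s}}[i,j,k])=\overline{\Gamma}(\mathbb{Z}_{2^{s}}[i,j,k])$, the vertices of $\Gamma_{E}$ are precisely the twin-classes, and $|N_{E}(\bar{\alpha})|$ is the number of twin-classes, other than $\bar{\alpha}$, that meet $N(\alpha)$. The idea is to read $N(\alpha)$ off Lemma~\ref{l4.2} — where it appears as a union of $\sim$-classes $[2^{l}\beta]$ with $\beta\in D\cup\{1\}$, already with the classes inside $\bar{\alpha}$ deleted — and then count the distinct twin-classes it contains, translating between $\sim$-classes and twin-classes via Corollary~\ref{c4.3}. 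For even $s$ the two notions coincide, so $|N_{E}(\bar{\alpha})|$ equals the number of distinct $\sim$-classes in the union; for odd $s$ the only difference is that $[2^{(s-1)/2}(1+i)]$, $[2^{(s-1)/2}(1+j)]$, $[2^{(s-1)/2}(1+k)]$ collapse to a single vertex of $\Gamma_{E}$, so one subtracts an extra $2$ exactly when all three occur in the union.

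First I would pin down the enumeration rules. By Lemma~\ref{l4.1} and the identity $2^{s-1}(1+i+j+k)=2^{s-1}(1+i+j-k)$ noted before Lemma~\ref{l4.2}, the classes $[2^{l}\beta]$ with $\beta\in D\cup\{1\}$, $0\le l<s$, are pairwise distinct apart from the single coincidence $[2^{s-1}(1+i+j+k)]=[2^{s-1}(1+i+j-k)]$. Consequently a block $\bigcup_{s-m\le l\le s-1,\ \beta\in D\cup\{1\}}[2^{l}\beta]$ lists $6m$ terms but only $6m-1$ distinct classes, while the smaller blocks occurring in Lemma~\ref{l4.2} (indexed by $D_{2}\cup\{\alpha\}$ or $D_{2}\setminus\{\alpha\}$ at a single exponent, or by $\{1\}$ over a range of exponents) consist of distinct classes and are mutually disjoint — different blocks either occupy disjoint ranges of the exponent $l$ or are separated by the alternative $\pi=1$ versus $\pi\in D$ in the factorisation of Lemma~\ref{c2.5}. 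Summing, each union of Lemma~\ref{l4.2} gets a single clean count.

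Next I would locate the thresholds behind the case distinctions. The deleted class $[2^{m}\alpha]$ sits at exponent $m$, hence lies in the stated union precisely when $m$ is inside the exponent-range of the block carrying the generator matching $\alpha$; comparing $m$ with $s-m$ and $s-1-m$ gives the conditions $2m\ge s$ in Lemma~\ref{l4.2}(i) and $2m\ge s-1$ in Lemma~\ref{l4.2}(iii),(v), so for those $m$ one subtracts an additional $1$. Likewise, for odd $s$, the exponent $(s-1)/2$ lies in a block carrying all of $1+i,1+j,1+k$ exactly when $m\ge (s+1)/2$ in parts~(i),(iii) and when $m\ge (s-1)/2$ in part~(v), so for those $m$ one subtracts an additional $2$. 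Since these two thresholds can differ — e.g.\ in Lemma~\ref{l4.2}(iii) the first correction starts at $2m\ge s-1$ but the second only at $m>(s-1)/2$ — the resulting piecewise count has two branches for even $s$ and two or three for odd $s$, matching the displayed formulas. Carrying this through Lemma~\ref{l4.2}(i) for $\overline{2^{m}}$, through (ii)--(iii) for $\overline{2^{m}\alpha}$ with $\alpha\in D_{1}$, and through (iv)--(v) for $\overline{2^{m}\beta}$ with $\beta\in D_{2}$ produces every value in the proposition; the few small values of $s$ are verified by direct inspection.

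The obstacle is not conceptual but arithmetical care: one must be sure that no coincidence among the $[2^{l}\beta]$ beyond the one at exponent $s-1$ is possible, that the blocks inside each part of Lemma~\ref{l4.2} really are disjoint, and that the thresholds around $m=s/2$ and $m=(s-1)/2$, with their parity-dependent roundings, are attached to the correct branch — a single misassignment would shift an entire family of neighbourhood sizes.
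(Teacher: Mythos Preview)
Your proposal is correct and follows exactly the route the paper indicates: the paper merely states that Proposition~\ref{p4.4} is obtained from Lemma~\ref{l4.2} and Corollary~\ref{c4.3}, and you have carefully unpacked that sentence, counting the distinct $\sim$-classes in each neighbourhood listed in Lemma~\ref{l4.2}, correcting for the single coincidence $[2^{s-1}(1+i+j+k)]=[2^{s-1}(1+i+j-k)]$, for the self-class deletion, and (when $s$ is odd) for the collapse of the three $D_{1}$-classes at exponent $(s-1)/2$ into one twin-class. Your threshold analysis is accurate and your bookkeeping matches the displayed formulas, so there is nothing to add.
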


Any graph automorphism preserves adjacency. Let $a,b\in\Gamma$, $f\in\mathrm{Aut}(\Gamma)$ such that $f(a)=b$. Then $|N(a)|=|N(b)|$. Therefore, the next lemma is easy to check by Proposition \ref{p4.4}.

\begin{lemma}\label{l4.5}
	Let $f\in \mathrm{Aut}( \Gamma_{E}(\mathbb{Z}_{2^{s}}[i,j,k]))$, $P_{m}=\{\overline{2^{m}(1+i)},\overline{2^{m}(1+j)},\overline{2^{m}(1+k)}\}$, $Q_{n}=\{\overline{2^{n}},\overline{2^{n-1}(1+i+j+k)},\overline{2^{n-1}(1+i+j-k)}\}$, where $0\leqslant m\leqslant s-1$, $1\leqslant n\leqslant s-1$. Then $f$ stabilizes $P_{m}$ and $Q_{n}$. In particular, $f(\overline{2^{s-1}(1+i+j+k)})=\overline{2^{s-1}(1+i+j+k)}$; $f(\overline{2^{\frac{s}{2}}})=\overline{2^{\frac{s}{2}}}$, $f(Q_{\frac{s}{2}}\setminus\{\overline{2^{\frac{s}{2}}}\})=Q_{\frac{s}{2}}\setminus\{\overline{2^{\frac{s}{2}}}\}$, if $s$ is even; $|P_{\frac{s-1}{2}}|=1$, if $s$ is odd.
\end{lemma}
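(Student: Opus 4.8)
The idea is to exploit the fact that an automorphism $f$ preserves the cardinalities $|N_E(\bar\alpha)|$, and then read off from Proposition \ref{p4.4} which vertices can be permuted with which. First I would set up the bookkeeping: the compressed graph $\Gamma_E(\mathbb Z_{2^s}[i,j,k])$ has, by Lemma \ref{l4.1} and Corollary \ref{c4.3}, exactly one vertex $\overline{2^m}$ for each $1\le m\le s-1$, one vertex $\overline{2^m\alpha}$ for each $\alpha\in D_1$ and $0\le m\le s-1$ (with the three of them coalescing into a single vertex when $m=\frac{s-1}{2}$ and $s$ is odd), and one vertex $\overline{2^m\beta}$ for each $\beta\in D_2$ and $0\le m\le s-2$ (recall $2^{s-1}(1+i+j+k)=2^{s-1}(1+i+j-k)$, so this last one is $\overline{2^{s-1}(1+i+j+k)}$, which I would track separately). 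The heart of the argument is that the three families $\{|N_E(\overline{2^m})|\}_m$, $\{|N_E(\overline{2^m\alpha})|\}_m$, $\{|N_E(\overline{2^m\beta})|\}_m$ of Proposition \ref{p4.4} are, for each fixed residue of the argument modulo $6$, strictly increasing in $m$, and the three families occupy disjoint residue classes mod $6$ (roughly $-1$, $+2$, $-1$ again but shifted, etc.), so that two compressed vertices with the same neighbourhood size must lie in the same $P_m$ or in the same $Q_n$.

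Concretely, the steps are: (1) For each value $v$ that occurs as some $|N_E(\cdot)|$, list the (at most one-parameter family of) vertices attaining it. Checking the piecewise formulas in Proposition \ref{p4.4}, I expect to find that each value is attained either by a single compressed vertex, or by exactly the three vertices of one set $P_m=\{\overline{2^m(1+i)},\overline{2^m(1+j)},\overline{2^m(1+k)}\}$ (since these three are symmetric — there is an obvious ring automorphism permuting $i,j,k$ — they genuinely have the same size and cannot be distinguished by size alone), or by the $\le 3$ vertices of one set $Q_n=\{\overline{2^n},\overline{2^{n-1}(1+i+j+k)},\overline{2^{n-1}(1+i+j-k)}\}$. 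The only subtlety is to confirm that no value is shared \emph{across} different indices $m$ or across a $P$-type and a $Q$-type vertex; this is where the mod-$6$ separation and monotonicity is used, and it is the step I expect to be the main obstacle, since one must carefully match up the boundary cases $m<\frac{s}{2}$ versus $m\ge\frac{s}{2}$ (and, when $s$ is odd, the extra break at $m=\frac{s-1}{2}$) in all three formulas and in both parities of $s$, and check that the ``$6m-$something'' values never collide with a ``$6m'+$something'' value.

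(2) Having established that each size class is exactly one $P_m$, one $Q_n$, or a singleton, it follows immediately that $f$ stabilizes each $P_m$ and each $Q_n$ setwise, because $f$ sends a vertex to one of the same size. (3) For the ``in particular'' claims: $\overline{2^{s-1}(1+i+j+k)}$ is the unique vertex of its size (its neighbourhood, by Lemma \ref{l4.2}(vi), is all of $\bigcup_{0\le l\le s-1,\beta\in D}[2^l\beta]$ minus itself, which is strictly larger than any $|N_E(\cdot)|$ appearing in the lists, being the maximum), so $f$ fixes it. When $s$ is even, $\overline{2^{s/2}}\in Q_{s/2}$ and the value $|N_E(\overline{2^{s/2}})|=6\cdot\frac{s}{2}-2=3s-2$ while the other two members of $Q_{s/2}$ are $\overline{2^{s/2-1}\beta}$ with $|N_E|=6(\frac s2-1)+4=3s-2$ as well — wait, these coincide, so I must instead distinguish $\overline{2^{s/2}}$ from the two $\overline{2^{s/2-1}\beta}$ inside $Q_{s/2}$ by a finer invariant: since $f(Q_{s/2})=Q_{s/2}$ and $f(P_{s/2})=P_{s/2}$, and $\overline{2^{s/2}}$ is adjacent to all of $P_{s/2}$ while $\overline{2^{s/2-1}\beta}$ is not adjacent to $P_{s/2}$ (check via Lemma \ref{l4.2}: the neighbourhood of $2^{s/2-1}\beta$, $\beta\in D_2$, involves $[2^l\gamma]$ for $\gamma\in D$ only in the range $l\ge s-1-(s/2-1)=s/2$, so it does not meet $[2^{s/2}(1+i)]$), the restriction of $f$ to $Q_{s/2}$ must fix $\overline{2^{s/2}}$ and hence permute the remaining pair. (4) Finally, when $s$ is odd, $|P_{(s-1)/2}|=1$ is just Corollary \ref{c4.3}(ii). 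Assembling (2)–(4) gives the statement.
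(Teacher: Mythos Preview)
Your approach is the same as the paper's: the paper simply remarks that automorphisms preserve degree and that the lemma is then ``easy to check by Proposition \ref{p4.4}''. Your plan fleshes this out correctly in outline, and your mod-$6$ separation of the three families is exactly the right bookkeeping device.

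There is, however, a genuine slip in step (3). When $s$ is even and $m=\tfrac{s}{2}-1$, you have $m<\tfrac{s-1}{2}$ (since $\tfrac{s}{2}-1<\tfrac{s}{2}-\tfrac12$), so the correct branch of Proposition \ref{p4.4} gives
\[
|N_E(\overline{2^{s/2-1}\beta})|=6\bigl(\tfrac{s}{2}-1\bigr)+5=3s-1,
\]
not $6(\tfrac{s}{2}-1)+4=3s-2$. Thus $\overline{2^{s/2}}$ (degree $3s-2$) and the two vertices $\overline{2^{s/2-1}\beta}$ (degree $3s-1$) already have \emph{different} degrees, and one checks as in your step (1) that no other vertex shares either value. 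So $f$ fixes $\overline{2^{s/2}}$ by degree alone and no finer invariant is needed. Moreover, your proposed finer invariant is itself mis-stated: your own computation shows the neighbourhood of $\overline{2^{s/2-1}\beta}$ contains $[2^l\gamma]$ for $\gamma\in D$ and $l\ge s/2$, which \emph{does} include $l=s/2$, so $\overline{2^{s/2-1}\beta}$ \emph{is} adjacent to $P_{s/2}$, contrary to what you wrote. Once you correct the arithmetic, this detour disappears and the argument goes through exactly as the paper intends.
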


Recall that a graph automorphism $f$ of a graph $\Gamma$ is a bijection $f:V(\Gamma)\rightarrow V(\Gamma)$ such that vertices $a$ and $b$ are adjacent if and only if $f(a)$ and $f(b)$ are adjacent. If $|V(\Gamma)|=n$, then it is clear that $\mathrm{Aut}(\Gamma)$ is isomorphic to a subgroup of $S_{n}$. By Lemma \ref{l4.5}, each graph automorphism of the compressed graph $\Gamma_{E}(\mathbb{Z}_{2^{s}}[i,j,k])$ is determined by its action on $P_{m}$ and $Q_{n}$, where $m\in\{0,\cdots,s-1\}$, $n\in\{1,\cdots,s-1\}$. Therefore, if $s$ is even, then the group of graph automorphism of $\Gamma_{E}(\mathbb{Z}_{2^{s}}[i,j,k])$ is isomorphic to a subgroup of $S_{3}^{2s-2}\times S_{2}\times S_{1}^{2}$. If $s$ is odd, then it is isomorphic to a subgroup of $S_{3}^{2s-2}\times S_{1}^{2}$.

We next define some actions of $S_{3}$ on $P_{m}$ and $Q_{n}$. Let $\alpha\in\{1+i,1+j,1+k\}$ and  $\beta\in\{2,1+i+j+k,1+i+j-k\}$. The action of $S_{3}$ on $P_{m}$ is given by $(g,\overline{2^{m}\alpha})\mapsto\overline{2^{m}g(\alpha})$. The action of $S_{3}$ on $Q_{n}$ is given by $(h,\overline{2^{n-1}\beta})\mapsto\overline{2^{n-1}h(\beta})$. For $h,h'\in S_{3}$ with $h'(\overline{2^{n}})=\overline{2^{n}}$, $h'(\overline{2^{n-1}}(1+i+j+k))=\overline{2^{n-1}(1+i+j-k))}$ and $h'(\overline{2^{n-1}(1+i+j-k))})=\overline{2^{n-1}(1+i+j+k))}$, we define $h^{*}=h'hh'$.

\begin{lemma}\label{l4.6}
	Let $f\in \mathrm{Aut}( \Gamma_{E}(\mathbb{Z}_{2^{s}}[i,j,k]))$. Then $f|_{P_{m}}=f|_{P_{s-1-m}}$, $(f|_{Q_{n}})^{*}=f|_{Q_{s-n}}$. In particular, $f|_{Q_{\frac{s}{2}}}=h_{\frac{s}{2}}'\text{ or }e$, if $s$ is even; $f|_{P_{\frac{s-1}{2}}}=e$, if $s$ is odd.
\end{lemma}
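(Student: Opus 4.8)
The plan is to pin down, for each relevant index, the induced bipartite subgraph of the compressed graph $\Gamma_E(\mathbb{Z}_{2^s}[i,j,k])$ on the pair $P_m\sqcup P_{s-1-m}$ (respectively $Q_n\sqcup Q_{s-n}$), to recognise it as a perfect matching whose combinatorial type can be named explicitly, and then to exploit that $f$ stabilises each set $P_m$ and $Q_n$ by Lemma~\ref{l4.5} and carries edges to edges. Throughout, the $\Gamma_E$-neighbourhoods will be read off from Lemma~\ref{l4.2}, while Lemma~\ref{l4.1} and Corollary~\ref{c4.3} guarantee that the vertices involved are pairwise distinct (so that ``perfect matching'' is literally true) and that $f|_{P_m}$ and $f|_{Q_n}$ are genuine elements of $S_3$ acting by the formulas recalled just before the lemma.

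First the $P$-part. Fix $m$ with $0\le m\le s-1$; the case $m=s-1-m$ (i.e.\ $s$ odd and $m=\frac{s-1}{2}$) is vacuous, and there $|P_{(s-1)/2}|=1$ by Lemma~\ref{l4.5}, so $f|_{P_{(s-1)/2}}=e$. Assume now $m\ne s-1-m$. Using Lemma~\ref{l4.2}(iii) when $m\ge 1$ and Lemma~\ref{l4.2}(ii) when $m=0$, I will verify that for each $\alpha\in D_1$ the only vertex of $P_{s-1-m}$ in the neighbourhood of $\overline{2^m\alpha}$ is $\overline{2^{s-1-m}\alpha}$: on the right-hand side of (ii)/(iii) every summand carries either a power of $2$ other than $2^{s-1-m}$ or a $\pi$ outside $\{1+i,1+j,1+k\}$, with the single exception of $[2^{s-1-m}\alpha]$. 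So the induced subgraph on $P_m\sqcup P_{s-1-m}$ is the diagonal matching with edges $\{\overline{2^m\alpha},\overline{2^{s-1-m}\alpha}\}$, $\alpha\in D_1$. Writing $g=f|_{P_m}$ and $g'=f|_{P_{s-1-m}}$ as permutations of $D_1$, applying $f$ to that edge yields the edge $\{\overline{2^m g(\alpha)},\overline{2^{s-1-m}g'(\alpha)}\}$, which by the matching description forces $g(\alpha)=g'(\alpha)$; since $\alpha$ was arbitrary, $f|_{P_m}=f|_{P_{s-1-m}}$.

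Next the $Q$-part, which is the same argument with a nontrivial matching. Fix $n$ with $1\le n\le s-1$ and first assume $n\ne s-n$. From Lemma~\ref{l4.2}(i) applied to $\overline{2^n}$, and Lemma~\ref{l4.2}(v) (or (iv) when $n=1$) applied to $\overline{2^{n-1}\beta}$ with $\beta\in D_2$, I will check that inside $Q_{s-n}$ the vertex $\overline{2^n}$ has unique neighbour $\overline{2^{s-n}}$, whereas $\overline{2^{n-1}\beta}$ has unique neighbour $\overline{2^{s-n-1}\beta'}$ with $\{\beta,\beta'\}=D_2$; here the would-be extra summands are excluded by the power of $2$ or by comparing $\pi=1$ with $\pi\in\{(1+i)(1+j),(1+i)(1-k)\}$. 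Hence, with $\beta$ ranging over the index set $\{2,1+i+j+k,1+i+j-k\}$, the induced subgraph on $Q_n\sqcup Q_{s-n}$ is the matching with edges $\{\overline{2^{n-1}\beta},\overline{2^{s-n-1}h'(\beta)}\}$, where $h'$ is the transposition fixing the index $2$ and interchanging $1+i+j+k$ and $1+i+j-k$. Setting $a=f|_{Q_n}$, $b=f|_{Q_{s-n}}$ in $S_3$, preservation of this matching gives $b\circ h'=h'\circ a$, so $b=h'ah'=a^{*}$, that is $(f|_{Q_n})^{*}=f|_{Q_{s-n}}$. For the final clauses: if $s$ is odd, $f|_{P_{(s-1)/2}}=e$ as already noted; if $s$ is even, Lemma~\ref{l4.5} gives that $f$ fixes $\overline{2^{s/2}}$ and stabilises the two-element set $Q_{s/2}\setminus\{\overline{2^{s/2}}\}$, so $f|_{Q_{s/2}}\in\{e,h'_{s/2}\}$ --- equivalently, taking $n=\frac{s}{2}$ in $(f|_{Q_n})^{*}=f|_{Q_{s-n}}$ shows $f|_{Q_{s/2}}$ centralises $h'_{s/2}$, whose centraliser in $S_3$ is $\{e,h'_{s/2}\}$.

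The conceptual step --- an automorphism that stabilises both ends of a perfect matching is determined on one end by its action on the other --- is immediate; I expect the only real work, and the main obstacle, to be the bookkeeping in the two middle paragraphs: confirming from Lemma~\ref{l4.2} that exactly one summand on each right-hand side meets the triple in question, and treating the boundary indices $m=0$, $n=1$, $n=s-1$ where (ii) and (iv) take the place of (iii) and (v).
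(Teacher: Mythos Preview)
Your proposal is correct and follows essentially the same approach as the paper: both arguments use Lemma~\ref{l4.5} to know that $f$ stabilises each $P_m$ and $Q_n$, then read off from Lemma~\ref{l4.2} that each vertex of $P_m$ (resp.\ $Q_n$) has a unique neighbour in $P_{s-1-m}$ (resp.\ $Q_{s-n}$), which forces the two restrictions to agree (resp.\ be $*$-conjugate). Your write-up is in fact more detailed than the paper's --- the paper compresses the $P$-case into a single line and dismisses the $Q$-case with ``Similarly'' --- and your explicit identification of the induced bipartite graph as a perfect matching (diagonal for $P$, twisted by $h'$ for $Q$) together with the centraliser remark for $n=s/2$ makes the mechanism clearer.
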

\begin{proof}
	Let $f\in \mathrm{Aut}( \Gamma_{E}(\mathbb{Z}_{2^{s}}[i,j,k]))$. By Lemma \ref{l4.5}, $f|_{P_{m}},f|_{P_{s-1-m}}$ can be viewed in the natural way as an element $g_{m},g_{s-1-m}$ of $S_{3}$. Let $\alpha,\beta\in \{1+i,1+j,1+k\}$. Suppose that $g_{m}(\overline{2^{m}\alpha})=\overline{2^{m}g_{m}(\alpha)}=\overline{2^{m}\beta}$. Since graph automorphism $f$ preserves adjacency, $f(N_{E}(\overline{2^{m}\alpha}))=N_{E}(\overline{2^{m}\beta})$. From Lemma \ref{l4.2} (ii) and (iii), we conclude that $f(\{\overline{2^{s-1-m}\alpha}\}\cup\star)=\{\overline{2^{s-1-m}\beta}\}\cup\star$. Then $f(\overline{2^{s-1-m}\alpha)}=\overline{2^{s-1-m}\beta}$, by Lemma \ref{l4.5}, which means that $g_{s-1-m}(\overline{2^{s-1-m}\alpha})=\overline{2^{s-1-m}g_{s-1-m}(\alpha)}=\overline{2^{s-1-m}\beta}$. Therefore, $f|_{P_{m}}=f|_{P_{s-1-m}}$. Similarly, we infer that $(f|_{Q_{n}})^{*}=f|_{Q_{s-n}}$.
\end{proof}

\begin{theorem}\label{t4.7}
	Let $s\geqslant 1$ be a positive integer, $H=\{(h,h^{*})\mid h\in S_{3}\}$. Then $\mathrm{Aut}( \Gamma_{E}(\mathbb{Z}_{2^{s}}[i,j,k]))$ is a finite direct product of symmetric groups. Specifically, $$\mathrm{Aut}( \Gamma_{E}(\mathbb{Z}_{2^{s}}[i,j,k]))\cong\begin{cases}
	S_{3}^{\frac{s}{2}}\times H^{\frac{s}{2}-1}\times S_{2}\times S_{1}^{2} ,\quad &\text{ s is even,}   \\
	S_{3}^{\frac{s-1}{2}}\times H^{\frac{s-1}{2}}\times S_{1}^{2} ,\quad &\text{ s is odd.}
	\end{cases}$$
\end{theorem}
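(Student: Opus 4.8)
\textbf{Proof proposal for Theorem \ref{t4.7}.}
The plan is to combine the structural constraints already established in Lemmas \ref{l4.5} and \ref{l4.6} into an explicit description of $\mathrm{Aut}(\Gamma_{E}(\mathbb{Z}_{2^{s}}[i,j,k]))$ as a product of copies of $S_3$, $H$, $S_2$ and $S_1$, and then verify that every such candidate map is genuinely an automorphism. By the discussion preceding Lemma \ref{l4.6}, an automorphism $f$ of the compressed graph is completely determined by the tuple $(f|_{P_0},\dots,f|_{P_{s-1}},f|_{Q_1},\dots,f|_{Q_{s-1}})$, each entry being an element of $S_3$ acting on the relevant three-element set (with $S_2$ or $S_1$ in the degenerate cases flagged in Lemma \ref{l4.5}). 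So the first step is to read off from Lemma \ref{l4.6} exactly which of these $2s-2$ coordinates are free and which are forced: the relation $f|_{P_m}=f|_{P_{s-1-m}}$ pairs up the $P$-blocks, and $(f|_{Q_n})^{*}=f|_{Q_{s-n}}$ pairs up the $Q$-blocks, leaving the middle block (when it exists) constrained to $\{e,h_{s/2}'\}\cong S_2$ for even $s$, and the middle $P$-block trivial ($S_1$) for odd $s$.

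Next I would carry out the bookkeeping separately for the two parities. When $s$ is even: the $s$ blocks $P_0,\dots,P_{s-1}$ split into $s/2$ mirror pairs $\{P_m,P_{s-1-m}\}$, each contributing one free copy of $S_3$ (since $f|_{P_m}$ determines $f|_{P_{s-1-m}}$), giving the factor $S_3^{s/2}$; the $s-1$ blocks $Q_1,\dots,Q_{s-1}$ consist of the self-paired middle block $Q_{s/2}$, which by Lemma \ref{l4.6} contributes $S_2$, together with $(s/2)-1$ genuine mirror pairs $\{Q_n,Q_{s-n}\}$, each of which is governed by a single $h\in S_3$ with the partner equal to $h^{*}$ — that is precisely the subgroup $H=\{(h,h^{*})\mid h\in S_3\}$, so these contribute $H^{(s/2)-1}$; and the two singleton vertices $\overline{2^{s-1}(1+i+j+k)}$, $\overline{2^{s/2}}$ are fixed (Lemma \ref{l4.5}), contributing $S_1^{2}$. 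Multiplying gives $S_3^{s/2}\times H^{(s/2)-1}\times S_2\times S_1^{2}$. When $s$ is odd: the blocks $P_0,\dots,P_{s-1}$ split into $(s-1)/2$ mirror pairs plus the middle block $P_{(s-1)/2}$, which has $|P_{(s-1)/2}|=1$ and is therefore trivial, so the $P$-blocks give $S_3^{(s-1)/2}\times S_1$; the $s-1$ blocks $Q_1,\dots,Q_{s-1}$ form exactly $(s-1)/2$ mirror pairs $\{Q_n,Q_{s-n}\}$ with no self-paired block, yielding $H^{(s-1)/2}$; and the lone fixed vertex $\overline{2^{s-1}(1+i+j+k)}$ gives one more $S_1$. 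This matches the stated $S_3^{(s-1)/2}\times H^{(s-1)/2}\times S_1^{2}$ (absorbing the trivial $P$-middle into the $S_1^2$ count).

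Finally, the reverse inclusion: I must check that every tuple satisfying the pairing relations of Lemma \ref{l4.6} does define an automorphism of $\Gamma_{E}$. Using Lemma \ref{l4.2} and Corollary \ref{c4.3}, the neighbourhood of each compressed vertex $\overline{2^m\alpha}$ is an explicit union of blocks of the form $[2^l\beta]$, and one observes that a coordinatewise action of the $S_3$'s on the blocks $P_m$ and $Q_n$ permutes the pieces appearing in each such neighbourhood consistently precisely when the $P_m=P_{s-1-m}$ and $Q_n$--$Q_{s-n}$ compatibility holds — because the "long" parts of the neighbourhoods (the full unions $\bigcup_{\beta\in D\cup\{1\}}[2^l\beta]$) are fixed setwise by any block permutation, while the "short" tails involve the mirror indices $s-1-m$, $s-2-m$, etc. So the subgroup generated by these admissible tuples both contains and is contained in $\mathrm{Aut}(\Gamma_{E})$. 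The main obstacle I anticipate is exactly this verification step: making sure that the interplay between the $D_1$-type tails (which pair an index $m$ with $s-1-m$) and the $D_2$-type tails (pairing $m$ with $s-2-m$ and also feeding into the $Q$-blocks) forces *no* further relations beyond those in Lemma \ref{l4.6}, and in particular that the $S_3$ acting on a $Q$-pair really is free rather than being cut down — i.e., that $H\cong S_3$ and the map $h\mapsto(h,h^{*})$ is an injective homomorphism, which requires checking that conjugation by $h'$ (an involution of $S_3$) makes $h\mapsto h^{*}=h'hh'$ a well-defined automorphism of $S_3$ so that $(h,h^*)(h_1,h_1^*)=(hh_1,(hh_1)^*)$.
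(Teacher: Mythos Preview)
Your proposal is correct and follows essentially the same route as the paper: both argue that Lemmas~\ref{l4.5} and~\ref{l4.6} force any $f\in\mathrm{Aut}(\Gamma_E)$ to lie in the stated product (via the mirror pairings $P_m\leftrightarrow P_{s-1-m}$ and $Q_n\leftrightarrow Q_{s-n}$, with the middle block degenerating to $S_2$ or $S_1$), and then invoke Lemma~\ref{l4.2} for the reverse inclusion. You are in fact more explicit than the paper about the reverse verification and about why $H\cong S_3$ (the paper dismisses the former as ``a trivial verification'' and leaves the latter implicit), so the only thing to tidy is the wording around the $S_1^2$ factor in the even case---$\overline{2^{s/2}}$ is already inside $Q_{s/2}$ rather than a separate singleton, though of course this has no effect on the group.
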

\begin{proof}
	Let $H=\{(h,h^{*})\mid h\in S_{3}\}$. In fact, by Lemma \ref{l4.6}, the action of $S_{3}\times S_{3}$ on $P_{m}\times P_{s-1-m}$ can be viewed as the action of $S_{3}$ on $P_{m}\times P_{s-1-m}$ which is given by $(g,(\overline{2^{m}\alpha},\overline{2^{s-1-m}\beta}))\mapsto(\overline{2^{m}g(\alpha)},\overline{2^{s-1-m}g(\beta)})$. Similarly, the action of $S_{3}\times S_{3}$ on $Q_{n}\times Q_{s-n}$ can be viewed as the action of $H$ on $Q_{n}\times Q_{s-n}$. Let $s$ is even. By Lemma \ref{l4.2}, a trivial verification shows that if $f\in S_{3}^{\frac{s}{2}}\times H^{\frac{s}{2}-1}\times S_{2}\times S_{1}^{2}$, then $f\in \mathrm{Aut}( \Gamma_{E}(\mathbb{Z}_{2^{s}}[i,j,k]))$. Conversely, suppose that $f\in \mathrm{Aut}( \Gamma_{E}(\mathbb{Z}_{2^{s}}[i,j,k]))$. Then $f$ can be viewed as $$((f|_{P_{0}},f|_{P_{s-1}}),\cdots,(f|_{P_{\frac{s}{2}-1}},f|_{P_{\frac{s}{2}}}),(f|_{Q_{1}},f|_{Q_{s-1}}),\dots,(f|_{Q_{\frac{s}{2}-1}},f|_{Q_{\frac{s}{2}+1}}),k,e,e),$$ where $k\in S_{2}$ is depending on $f|_{Q_{\frac{s}{2}}}$, since Lemma \ref{l4.5}. Therefore, by Lemma \ref{l4.6}, $f\in S_{3}^{\frac{s}{2}}\times H^{\frac{s}{2}-1}\times S_{2}\times S_{1}^{2}$, which is the desired conclusion. Similar considerations apply to the case when $s$ is odd, which completes the proof.
\end{proof}

\begin{lemma}\label{l4.8}
	$\mathrm{Aut}(\Gamma)\slash \mathrm{Reg}(\Gamma)\cong \mathrm{Aut}(\Gamma_{E})$.
\end{lemma}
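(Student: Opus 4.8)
The plan is to construct a surjective group homomorphism $\Phi\colon\mathrm{Aut}(\Gamma)\to\mathrm{Aut}(\Gamma_E)$ whose kernel is exactly $\mathrm{Reg}(\Gamma)$, and then quote the first isomorphism theorem; here $\Gamma=\Gamma(\mathbb{Z}_{2^{s}}[i,j,k])$ and $\Gamma_E$ is its compressed graph. Since an automorphism $f$ of $\Gamma$ and its inverse both send twin points to twin points, $f$ descends to a well-defined bijection $\Phi(f)$ on the set of twin classes, and because adjacency of two distinct twin classes in $\Gamma_E$ is tested by adjacency of arbitrary representatives in $\Gamma$ — which $f$ respects — $\Phi(f)$ lies in $\mathrm{Aut}(\Gamma_E)$. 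That $\Phi$ is a homomorphism is immediate from the definition. An automorphism $f$ lies in $\ker\Phi$ exactly when $\overline{f(a)}=\bar a$ for every $a$, i.e. exactly when $f(a)$ is always a twin point of $a$; by the definition of a regular automorphism this says $\ker\Phi=\mathrm{Reg}(\Gamma)$ (so, in passing, $\mathrm{Reg}(\Gamma)$ is a normal subgroup). Hence $\mathrm{Aut}(\Gamma)/\mathrm{Reg}(\Gamma)\cong\mathrm{im}\,\Phi$, and the whole point is to show $\Phi$ is onto.

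For surjectivity, fix $g\in\mathrm{Aut}(\Gamma_E)$; I want an $f\in\mathrm{Aut}(\Gamma)$ with $\Phi(f)=g$. The key elementary observation is that a twin class $\bar a$ with at least two elements is either a clique of $\Gamma$ (precisely when $a^{2}=0$) or an independent set of $\Gamma$ (precisely when $a^{2}\ne0$), since for $b\in\bar a$ the equalities $N_r(a)=N_r(b)$, $N_l(a)=N_l(b)$ determine whether the two copies are joined. Thus $\Gamma$ is the ``blow-up'' of $\Gamma_E$ obtained by replacing each vertex $\bar a$ by $\lvert\bar a\rvert$ vertices, internally a clique or an independent set according to the type of $\bar a$, with all edges present between classes adjacent in $\Gamma_E$. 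Consequently, if $g$ sends each twin class to a twin class of the same cardinality and of the same type, I may choose bijections $\phi_{\bar a}\colon\bar a\to g(\bar a)$, glue them to a map $f$ with $f|_{\bar a}=\phi_{\bar a}$, and $f$ will be an automorphism of $\Gamma$ with $\Phi(f)=g$. So surjectivity reduces to showing that $\mathrm{Aut}(\Gamma_E)$ preserves the pair $\bigl(\lvert\bar a\rvert,\mathrm{type}(\bar a)\bigr)$ for every twin class.

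That reduction is exactly where the work is, and it is the main obstacle, because the abstract part above fails for a general graph — one can easily build even a zero divisor graph in which $\mathrm{Aut}(\Gamma_E)$ swaps twin classes of different sizes — so this step has to use the arithmetic of $\mathbb{Z}_{2^{s}}[i,j,k]$ through the results already proved. By Lemmas \ref{l4.5} and \ref{l4.6} and Theorem \ref{t4.7}, every $g\in\mathrm{Aut}(\Gamma_E)$ stabilises each $P_m$ and each $Q_n$ and, inside them, permutes only the three classes $[2^{m}(1+i)],[2^{m}(1+j)],[2^{m}(1+k)]$, respectively $[2^{n}],[2^{n-1}(1+i+j+k)],[2^{n-1}(1+i+j-k)]$. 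For the $P_m$-triples, the ring automorphism of $\mathbb{Z}_{2^{s}}[i,j,k]$ cyclically permuting $i,j,k$ carries the three classes to one another, so they have equal size and equal type; for the last two classes in a $Q_n$-triple the ring automorphism $i\mapsto j,\ j\mapsto i,\ k\mapsto -k$ does the same; and the remaining comparison, $[2^{n}]$ against $[2^{n-1}(1+i+j+k)]$, I would settle by a direct count — using Lemma \ref{c2.5} one reduces each $\sim$-class modulo a suitable power of $2$ and computes $\lvert U(\mathbb{Z}_{2^{t}}[i,j,k])\rvert$ and the annihilator of $1+i+j+k$, obtaining equal sizes, while squaring $2^{n}u$ and $2^{n-1}(1+i+j+k)u$ shows the types agree in every index range in which $g$ actually moves $\overline{2^{n}}$ (the one borderline index being fixed by $g$, by Lemma \ref{l4.5}). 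With $\lvert\bar a\rvert=\lvert g(\bar a)\rvert$ and $\mathrm{type}(\bar a)=\mathrm{type}(g(\bar a))$ established, the lift $f$ exists, $\Phi$ is surjective, and $\mathrm{Aut}(\Gamma)/\mathrm{Reg}(\Gamma)=\mathrm{Aut}(\Gamma)/\ker\Phi\cong\mathrm{Aut}(\Gamma_E)$.
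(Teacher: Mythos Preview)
Your skeleton is exactly the paper's: define the natural map $\Phi\colon\mathrm{Aut}(\Gamma)\to\mathrm{Aut}(\Gamma_E)$ by $\varphi\mapsto(\bar a\mapsto\overline{\varphi(a)})$, observe that its kernel is $\mathrm{Reg}(\Gamma)$, and invoke the first isomorphism theorem. The paper's proof stops there and says ``similar to \cite[Theorem~2.7]{W15}'', so on the level of the written argument you and the paper agree.

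Where you differ is in how surjectivity is handled. The paper treats Lemma~\ref{l4.8} as a general fact whose proof can be transported from \cite{W15}; you instead argue surjectivity \emph{for this specific ring}, feeding in Lemma~\ref{l4.5} (each $g\in\mathrm{Aut}(\Gamma_E)$ stabilises every $P_m$ and $Q_n$) and then using explicit ring automorphisms (the cyclic permutation of $i,j,k$; the involution $i\leftrightarrow j$, $k\mapsto -k$) to force equal sizes and equal clique/independent type within each block, with a residual direct count for $[2^{n}]$ versus $[2^{n-1}(1+i+j+k)]$. Your remark that the isomorphism can fail for a general graph is correct and is a genuine reason to be uneasy with a one-line citation; your route repairs that by making the argument depend on the already-proved structural lemmas, at the cost of tying Lemma~\ref{l4.8} to this particular $\Gamma$ rather than leaving it as a portable statement. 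Two small points to tighten if you pursue this line: (i) the ``direct count'' of $\lvert[2^{n}]\rvert$ versus $\lvert[2^{n-1}(1+i+j+k)]\rvert$ should be written out, since the unique factorisation of Lemma~\ref{c2.5} does not immediately give equal fibre sizes across different $\pi$'s; (ii) for the type comparison you invoke a ``borderline index fixed by Lemma~\ref{l4.5}'' --- make explicit that when $s$ is even Lemma~\ref{l4.5} pins down $\overline{2^{s/2}}$, so the only $Q_n$ in which $g$ can mix $\overline{2^{n}}$ with a $D_2$-class are those where both square to zero, and when $s$ is odd the analogous check goes through. With those two gaps filled your argument is complete and arguably more self-contained than the paper's citation.
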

\begin{proof}
	For $\varphi\in\mathrm{Aut}(\Gamma)$, we define $\varphi '$ from $V(\Gamma_{E})$ to itself by $\varphi '(\overline{a})=\overline{\varphi(a)}$ for any $a\in V(\Gamma_{E})$. Let $f$ be a mapping from $\mathrm{Aut}(\Gamma)$ to $ \mathrm{Aut}(\Gamma_{E}) $ which is given by $f(\varphi)=\varphi '$ for any $\varphi\in \mathrm{Aut}(\Gamma)$. Similar to \cite[Theorem 2.7]{W15}, the lemma follows.
\end{proof}

By Theorem \ref{t4.7}, Lemma \ref{l4.8} and \cite[Lemma 2.3]{W15}, the following theorem is obvious.

\begin{theorem}
	$\mathrm{Aut}( \Gamma(\mathbb{Z}_{2^{s}}[i,j,k]))\cong \mathrm{Reg}( \Gamma(\mathbb{Z}_{2^{s}}[i,j,k])) \rtimes\mathrm{Aut}( \Gamma_{E}(\mathbb{Z}_{2^{s}}[i,j,k]))$.
\end{theorem}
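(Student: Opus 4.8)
The plan is to recognize this final theorem as a standard structural result: the full automorphism group of the zero divisor graph decomposes as a semidirect product of the "regular" automorphisms (those coming from relabelling within twin classes) and the automorphisms of the compressed graph. The strategy is to combine three ingredients already assembled in the paper: Lemma \ref{l4.8}, which gives the short exact sequence $1\to\mathrm{Reg}(\Gamma)\to\mathrm{Aut}(\Gamma)\to\mathrm{Aut}(\Gamma_E)\to 1$ with $\mathrm{Aut}(\Gamma)/\mathrm{Reg}(\Gamma)\cong\mathrm{Aut}(\Gamma_E)$; Theorem \ref{t4.7}, which identifies $\mathrm{Aut}(\Gamma_E)$ explicitly as a product of symmetric groups; and \cite[Lemma 2.3]{W15}, which presumably supplies the splitting of that sequence.

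First I would observe that $\mathrm{Reg}(\Gamma)$ is a normal subgroup of $\mathrm{Aut}(\Gamma)$: conjugating a regular automorphism by an arbitrary automorphism again sends each vertex to a twin of itself, since the twin relation is preserved by every graph automorphism (twins are characterized purely in terms of adjacency via $N(a)=N(b)$). Then Lemma \ref{l4.8} tells us the quotient is $\mathrm{Aut}(\Gamma_E)$. To upgrade this extension to a semidirect product, I would exhibit a section $\sigma:\mathrm{Aut}(\Gamma_E)\to\mathrm{Aut}(\Gamma)$, i.e. a group homomorphism splitting $f$. This is exactly where \cite[Lemma 2.3]{W15} enters: for a finite commutative-like ring whose zero divisor graph has this twin structure, one can lift each automorphism of the compressed graph to an automorphism of $\Gamma$ by choosing, within each twin class $\bar a$, a coherent bijection onto the target twin class $\overline{f(a)}$ — coherent meaning the choice is made once and for all via the ring/module structure (for instance fixing a distinguished representative $2^l\alpha$ in each class and mapping it to the distinguished representative of its image), so that the lift respects composition. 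Since $\Gamma(\mathbb{Z}_{2^s}[i,j,k])=\overline{\Gamma}(\mathbb{Z}_{2^s}[i,j,k])$ is reversible, there is no left/right asymmetry to worry about, and the lift is well defined on the whole graph.

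With the split in hand, $\mathrm{Aut}(\Gamma)\cong\mathrm{Reg}(\Gamma)\rtimes\mathrm{Aut}(\Gamma_E)$, where $\mathrm{Aut}(\Gamma_E)$ acts on $\mathrm{Reg}(\Gamma)$ by conjugation through $\sigma$: an automorphism of the compressed graph permutes the twin classes, and this permutation induces the corresponding permutation of the coordinate factors of $\mathrm{Reg}(\Gamma)$ (each twin class $\bar a$ contributes a symmetric group $S_{|\bar a|}$ of internal relabellings, and $\mathrm{Reg}(\Gamma)$ is the direct product of these over all classes). Substituting the explicit description of $\mathrm{Aut}(\Gamma_E)$ from Theorem \ref{t4.7} then yields the stated isomorphism. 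I would keep the write-up to a single sentence or two, simply citing Theorem \ref{t4.7}, Lemma \ref{l4.8} and \cite[Lemma 2.3]{W15}, exactly as the excerpt's sentence "the following theorem is obvious" signals.

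The main obstacle — really the only nontrivial point — is verifying that the section $\sigma$ is a genuine group homomorphism and not merely a set-theoretic splitting; that is, that the coherent choice of representatives inside twin classes can be made globally so that $\sigma(f\circ g)=\sigma(f)\circ\sigma(g)$. This is precisely the content borrowed from \cite[Lemma 2.3]{W15}, so in practice the proof is a citation; if one wanted a self-contained argument one would set up the distinguished-representative system using Lemma \ref{l4.1} (each equivalence class has a canonical form $2^l\alpha$ with $\alpha\in D$) together with Corollary \ref{c4.3} describing exactly when two such classes are twins, and check functoriality on those canonical forms. Everything else — normality of $\mathrm{Reg}(\Gamma)$, triviality of the intersection $\mathrm{Reg}(\Gamma)\cap\sigma(\mathrm{Aut}(\Gamma_E))$, and the conjugation action — is routine.
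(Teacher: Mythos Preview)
Your proposal is correct and follows exactly the paper's approach: the paper simply states that the theorem is obvious from Theorem \ref{t4.7}, Lemma \ref{l4.8}, and \cite[Lemma 2.3]{W15}, and you have correctly identified the roles of each of these ingredients (quotient structure, explicit description of $\mathrm{Aut}(\Gamma_E)$, and the splitting section, respectively).
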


\vskip 30pt
\allowdisplaybreaks{

}
\end{document}